\newcommand*{\QEDA}{\hfill\hbox{\vrule width1.0ex height1.0ex}}
\newcommand*{\E}{\mathbb{E}}
\newtheorem{thm}{Theorem}[section]
\newtheorem{theorem}[thm]{Theorem}
\newtheorem{lemma}[thm]{Lemma}
\newtheorem{proposition}[thm]{Proposition}
\newtheorem{corollary}[thm]{Corollary}
\newtheorem{definition}[thm]{Definition}
\newcommand{\beq}{\begin{equation}}
\newcommand{\eeq}{\end{equation}}
\newcommand{\beqa}{\begin{eqnarray}}
\newcommand{\eeqa}{\end{eqnarray}}
\newcommand{\beqas}{\begin{eqnarray*}}
\newcommand{\eeqas}{\end{eqnarray*}}
\newcommand{\bi}{\begin{itemize}}
\newcommand{\ei}{\end{itemize}}
\newcommand{\vgap}{\vspace{.1in}}
\newcommand{\nn}{\nonumber}
\newcommand{\R}{\mathbb{R}}
\newcommand{\lam}{{\lambda}}
\newcommand{\norm}[1]{\left\Vert#1\right\Vert}
\newcommand{\inner}[2]{\langle #1,#2\rangle}
\newcommand{\argmin}{\mathrm{argmin}\,}
\newcommand{\dom}{\mathrm{dom}\,}
\newcommand{\Argmin}{\mathrm{Argmin}\,}
\newcommand{\Conv}[1]{\mbox{\rm Conv}(\R^{#1})}
\newcommand{\bConv}[1]{\overline{\mbox{\rm Conv}}\,(\R^{#1})}
\newcommand{\tz}{\tilde z}
\begin{document}
\title{Proximal bundle methods
	for hybrid weakly convex composite optimization problems}
\author{
		Jiaming Liang \thanks{
        Goergen Institute for Data Science and Artificial Intelligence and Department of Computer Science, University of Rochester, Rochester, NY 14620 (email: {\tt jiaming.liang@rochester.edu}). This work was partially supported by AFOSR grant FA9550-25-1-0182. }\qquad 
        Renato D.C. Monteiro \thanks{School of Industrial and Systems
		Engineering, Georgia Institute of
		Technology, Atlanta, GA, 30332-0205.
		(email: {\tt renato.monteiro@isye.gatech.edu} and {\tt hzhang906@gatech.edu}). This work
			was partially supported by AFORS Grants FA9550-22-1-0088 and FA9550-25-1-0131.}\qquad 
	Honghao Zhang \footnotemark[2]
	}
	\date{March 26, 2023 (first revision: December 26, 2024)}

	\date{March 26, 2023 (first revision: December 26, 2024; second revision: May 16, 2026)}
\maketitle

\begin{abstract}
       This paper establishes the iteration-complexity of proximal bundle methods for solving
 hybrid (i.e., a blend of smooth and nonsmooth) weakly convex composite optimization (HWC-CO) problems.
 This is done in a unified manner by considering a proximal bundle framework (PBF), which includes various well-known bundle update schemes.
In contrast to
hard-to-check stationary conditions 
 (e.g., the Moreau  stationarity)
 used by other methods
for solving HWC-CO, PBF uses a stationarity measure
that is easily verifiable.

       
		{\bf Key words.} hybrid weakly convex composite optimization, iteration-complexity, proximal bundle method, regularized stationary point, Moreau envelope. 
		\\
		
		{\bf AMS subject classifications.} 
		49M37, 65K05, 68Q25, 90C25, 90C30, 90C60
	\end{abstract}
	 
\section{Introduction}
	Let  $h: \R^{n} \rightarrow \R\cup \{ +\infty \} $ be a proper lower semi-continuous convex function
 and $ f: \R^{n} \rightarrow \R\cup \{ +\infty \} $ be a proper lower semi-continuous $m$-weakly convex function (i.e.,
 $f+(m/2)\|\cdot\|^2$
 is convex)
 such that
	$ \dom f \supseteq \dom h$  and
	consider
	the composite optimization (CO) problem
	\begin{equation}\label{eq:opt_problem}
	\min \left\{\phi(x):=f(x)+h(x): x \in \R^n\right\}.
	\end{equation}
	It is said that \eqref{eq:opt_problem} is a hybrid weakly convex CO (HWC-CO) problem if there exist
	nonnegative scalars $M$ and $L$
	and a
	first-order oracle $f':\dom h \to \R^n$
	(i.e., $f'(x)\in \partial f(x)$
	for every $x \in \dom h$)
	satisfying the $(M,L)$-hybrid condition,
	namely:
	$\|f'(u)-f'(v)\| \le 2M + L \|u-v\|$ for every $u,v \in \dom h$.
 This problem class includes the class of weakly convex
 non-smooth (resp., smooth) CO problems, i.e., the one with $L=0$
 (resp., $M=0$).

 This problem class appears in various applications in modern data science where $f$ is usually a loss function and $h$ is either the indicator function of some set (e.g., the set of points satisfying some functional constraints) or a regularization function that imposes sparsity or some special structure on the solution being sought.
Examples of such applications are robust phase retrieval, covariance matrix estimation, and sparse dictionary learning
(see Subsection 2.1 of \cite{davis2019stochastic} and the references therein).

 The main goal of this paper is to study the complexity of
  a unified framework (referred to as PBF) of  
 proximal bundle (PB) methods for solving the HWC-CO problem \eqref{eq:opt_problem}.
More specifically,
like 
 other proximal bundle methods, a PBF iteration solves a prox bundle subproblem
	of the form
 \begin{equation}\label{eq:x-pre}
	    x = \underset{u\in \R^n}\argmin \left \{\Gamma (u) + \frac{1}{2\lam} \|u-x^c\|^2 \right\}
	\end{equation}
	where $\lam$ is the prox stepsize,
	$x^c$ is
 the current
	prox-center,
 and (usually simple) lower semi-continuous convex function $\Gamma$ denotes  the current bundle function.
	Moreover,
 the bundle function is updated in every iteration but the prox center $x^c$ is updated in some
 (i.e., serious)
 iterations and left the same
 in the other (i.e., null) ones.
 But instead of choosing the bundle function underneath $\phi$ as in the proximal bundle  methods for solving the convex version of \eqref{eq:opt_problem},
 PBF uses the idea of choosing $\Gamma$ underneath $\phi(\cdot)+(m/2)\|\cdot-x^c\|^2$.
 An interesting
 feature of
 PBF is that it terminates based on
 an easily verifiable stopping criterion. 

 

 {\it Literature Review:}
 This paragraph discusses 
 the development of
 proximal bundle methods
 in the context of
 convex CO problems.
They have been proposed  in  \cite{lemarechal1975extension,wolfe1975method}, and then further studied for example in \cite{frangioni2002generalized,mifflin1982modification,de2014convex}. Their convergence analyses for nonsmooth (i.e., $L=0$ and $M>0$) CO problems are broadly discussed for example in the textbooks \cite{ruszczynski2011nonlinear,urruty1996convex}. Moreover, their complexity
analyses are studied
for example in \cite{diaz2023optimal,du2017rate,kiwiel2000efficiency,liang2021proximal,liang2024unified}.

We now discuss the development of PB methods
in the context of weakly convex CO problems as mentioned at the beginning of this introduction but with $L=0$.
We start with papers
that deal only with their asymptotic convergence.
PB extensions to this context can be found in
\cite{fuduli2004minimizing,hare2009computing,hare2010redistributed,hare2016proximal,kiwiel2006methods,makela1992nonsmooth,noll2008proximity,vlvcek2001globally}.
In particular, papers
\cite{hare2009computing,hare2010redistributed,hare2016proximal}
already considered the idea of constructing 
convex bundle
(more precisely, cutting-plane) models
underneath the regularized function $\phi(\cdot)+(m/2)\|\cdot-x^c\|^2$,
which, as already mentioned above, is the one adopted in this paper. 
The more recent paper \cite{atenas2023unified} proposes a model to analyze descent-type bundle methods and
establishes local convergence rate for the (serious) iteration sequence  as well as function value sequence under some strong stationary growth condition.
None of the aforementioned papers establish (either serious or overall) iteration complexities for their
methods.


{\it Main contribution.}
As already mentioned above, this paper establishes the (serious and overall) iteration-complexity of PB methods for solving the HWC-CO problem described at the beginning of this introduction.  An interesting feature of our analysis is that it is based on a stationarity measure
which, in contrast to other well-known stationary conditions
in the context of
HWC-CO (including the Moreau stationary one), is easily and directly verifiable. Moreover, by proper choice of tolerances, it is shown that the new measure
implies these
other
well-known near stationarity conditions (including the Moreau stationary one), so that any complexity result
based on the first one can be easily translated to the latter ones.

As a consequence of our analysis,
it is shown that the iteration-complexity for PBF to find a $\rho$-Moreau stationary point
of $\phi$ 
is similar to that of the deterministic versions of
the stochastic proximal subgradient (PS) methods studied 
in \cite{davis2019stochastic,davis2019proximally},
i.e., ${\cal O}(\rho^{-4})$,
 but the first complexity bound has the  advantage that its constant (in its ${\cal O}(\cdot)$) is never worse and is generally better than the one which appears in the bound for the PS method
of \cite{davis2019stochastic}.
The latter feature of PBF is due to its
bundle nature which allows it to
use a considerably  larger prox stepsize that is determined by
the weakly convex parameter.
This contrasts with the nature of proximal subgradient-type methods which
use relatively small prox stepsizes
(e.g., depending on a pre-specified iteration count or decreasing towards zero as the latter grows).

Other advantages of our method are:
1) PBF assumes that $f$ satisfies the more general hybrid condition (both
\cite{davis2019stochastic,davis2019proximally} only consider the case where $L=0$);
2) as opposed to the methods of \cite{davis2019stochastic,davis2019proximally},
 PBF does not require knowledge of
the constants $M$ and $L$ (see the first paragraph above) which are usually
hard to estimate; but as the first two methods, PBF still requires knowledge of a weakly convex parameter $m$;
and 3)
it can obtain an estimate of the size of the gradient of the Moreau envelope whenever  the prox subproblem is solved according to an easily verifiable stopping criterion;
in contrast, 
Algorithm 2 of \cite{davis2019proximally} needs to
perform a pre-specified number of
subgradient iterations to the prox subproblem
(see Section 3 of \cite{davis2019proximally}) to estimate the
size of the gradient of the Moreau envelope.





{\it Organization of the paper.}
Subsection~\ref{subsec:DefNot}  presents basic definitions and notation used throughout the paper.
Section~\ref{Sec:background} presents three stationary conditions and  clarifies the relationships among these three notions.
Section~\ref{Sec:PBF} contains four subsections. Subsection~\ref{sec:assumptions} formally
describes problem \eqref{eq:opt_problem} and the assumptions made on it. Subsection~\ref{SEC:SCS}
reviews
the deterministic version of the stochastic composite subgradient method of \cite{davis2019stochastic}. Subsection~\ref{subsec:update} describes PBF and states the iteration-complexity result of PBF. Subsection~\ref{sec:BUF} presents two concrete instances of PBF.  Section~\ref{sec:proof} contains three subsections. Subsection \ref{subsec:length} provides a preliminary bound on the length of a cycle in PBF and Subsection \ref{subsec:cycle} bounds the number of cycles generated by PBF. Subsection \ref{subsec:proof} presents the proof of the main complexity result.
  Section \ref{num} contains two subsections. Subsection \ref{pr} presents computational results comparing PBF against the PS method for the phase retrieval problem. Subsection~\ref{BD} showcases  computational results comparing PBF against the PS method for the blind deconvolution problem.
Section~\ref{sec:conclusion} gives some concluding remarks and potential directions for future research.
Appendix~\ref{APP:sub} describes two useful technical results about subdifferentials.
Appendix~\ref{App:relation} provides proofs for the results in Section~\ref{Sec:background}. Appendix~\ref{IRRF} describes one important result  about recursive formula.
Appendix~\ref{APP:analysis} presents some useful technical results used in Section~\ref{sec:proof}.
Finally, Appendix~\ref{sec:termination} presents a complementary convergence guarantee in terms of the Moreau envelope using PBF.

    \subsection{Basic definitions and notation} \label{subsec:DefNot}

    The sets of real numbers and positive real numbers are denoted by $\R$ and $\R_{++}$, respectively. 
	Let $\R^n$ denote the standard $n$-dimensional Euclidean space equipped with  inner product and norm denoted by $\left\langle \cdot,\cdot\right\rangle $
	and $\|\cdot\|$, respectively. 
	Let $\log(\cdot)$ denote the natural logarithm and $\log^+(\cdot)$ denote $\max\{\log(\cdot),0\}$. Let ${\cal O}$ denote the standard big-O notation.

	For a given function $\varphi: \R^n\rightarrow (-\infty,+\infty]$, let $\dom \varphi:=\{x \in \R^n: \varphi (x) <\infty\}$ denote the effective domain of $\varphi$ 
	and $\varphi$ is proper if $\dom \varphi \ne \emptyset$.
	A proper function $\varphi: \R^n\rightarrow (-\infty,+\infty]$ is $\mu$-convex for some $\mu \ge 0$ if
	$$
	\varphi(\lam x+(1-\lam) y)\leq \lam \varphi(x)+(1-\lam)\varphi(y) - \frac{\mu \lam (1-\lam)}{2}\|x-y\|^2
	$$
	for every $x, y \in \dom \varphi$ and $\lam \in [0,1]$.
 Denote the set of all proper lower semicontinuous convex functions by $\bConv{n}$.
	For $\varepsilon \ge 0$, the \emph{$\varepsilon$-subdifferential} of $ \varphi $ at $x \in \dom \varphi$ is denoted by
	\beq \label{def:subdif}
 \partial_\varepsilon \varphi (x):=\left\{ s \in\R^n: \varphi(y)\geq \varphi(x)+\left\langle s,y-x\right\rangle -\varepsilon, \forall y\in\R^n\right\}.
        \eeq
	For simplicity, the subdifferential of $\varphi$ at $x \in \dom \varphi$, i.e., $\partial_0 \varphi(x)$, is denoted by $\partial \varphi (x)$.
         The set of proper closed convex functions $\Gamma$ such that $\Gamma \le \varphi$ is denoted by ${\cal B}(\varphi)$ and any such $\Gamma$ is called a bundle for $\varphi$.

    \section{Basic Definitions and Background}
    \label{Sec:background}

    This section introduces the subdifferential and directional derivative of a general closed function. It then defines the regularized stationary point sought by the main algorithm of this paper. In addition, it presents two alternative stationarity conditions, formulated in terms of the directional derivative and the Moreau envelope, respectively, and clarifies the relationships among these three notions.

We start by giving the definitions of
directional derivative
of a closed  function.

\begin{definition}\label{def:di-deriv}
The directional derivative $\phi'(x;d)$
of
$\phi$ at $x$ along $d$ is
\[
\phi'(x;d) := \liminf _{t \downarrow 0} \frac{\phi(x+td)-\phi(x)}{t}.
\]
\end{definition}


The next definition for $\varepsilon$-subdifferential can be found 
in  Definition 1.10 of \cite{kruger2003frechet}.


\begin{definition}[Directional stationary point] \label{def:dsp}
    For a pair $(\varepsilon_D,\delta_D) \in \R^2_{++}$, a point $x\in \dom \phi$ is called a $(\varepsilon_D,\delta_D)$-directional stationary point if there exists $\tilde x \in \dom \phi$ such that 
    \begin{equation}\label{eq:dir}
    \|x- \tilde x\|\le \delta_D, \quad \inf_{\|d\|\le 1} \phi'(\tilde x;d) \ge -\varepsilon_D.
    \end{equation}
\end{definition}
When $(\varepsilon_D,\delta_D)=(0,0)$, then \eqref{eq:dir}
reduces to the condition that $\phi'(x;d) \ge 0$ for all $d \in \R^n$, a condition which is known to be equivalent
to $0 \in \partial \phi(x)$.
It is worth noting that among the three notions of stationary points,
 the directional stationary one is the only one which does not depend on the weak convexity parameter $m$.

\begin{definition}
The Frechet subdifferential of a proper closed function
$\phi: \mathbb{R}^{n} \rightarrow \mathbb{R} \cup\{\infty\}$ is defined as 
\[
 \partial \phi(x)=\left\{v \in \R^n: \liminf _{y \rightarrow x} \frac{\phi(y)-\phi(x)-\left\langle v, y-x\right\rangle}{\|y-x\|} \geq 0 
\quad \forall x \in \R^n \right\}.
\]
\label{def:Fre_epsilon_subdifferential}
\end{definition}

Before stating the next result, we introduce the  following notation which is used not only here but also throughout the paper:
    for every function $g(\cdot)$, $m \in \R_+$, $z \in \R^n$,
    let 
    \begin{equation}
   g_m(\cdot;z) := g (\cdot) + \frac{m}{2}\|\cdot - z\|^2.
    \label{def:phim}
    \end{equation}
The following result  provides a characterization of the
Frechet subdifferential 
for a weakly convex function.

\begin{proposition}\label{prop:Fre_sub}
\label{prop:Frechet relation}
Assume that $\phi: \mathbb{R}^{
n} \rightarrow \mathbb{R} \cup\{\infty\}$ is a closed $m$-weakly convex function. Then  we have
 \begin{align}
    \partial \phi(x) 
 &=\left\{ v \in \R^n :  \phi(y) \geq \phi(x)+\langle v, y-x\rangle-\frac{ m}{2}\|y-x\|^{2}, \ \forall y \in \R^n \right\} \label{eq:key_chara} \\
 &=
 \partial \left[\phi_{ m}(\cdot;x)
 \right](x)  \label{eq:key_chara1}.
 \end{align}
\end{proposition}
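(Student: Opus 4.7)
The natural approach is to work with the regularized function $\psi_x := \phi_m(\cdot;x) = \phi(\cdot) + (m/2)\|\cdot - x\|^2$, which is convex by the $m$-weak convexity of $\phi$. The plan has three steps that establish \eqref{eq:key_chara1} first and then derive \eqref{eq:key_chara} from it.

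First, I would prove \eqref{eq:key_chara1}, i.e., $\partial\phi(x) = \partial\psi_x(x)$. Writing $g(y) := (m/2)\|y-x\|^2$, one has $g(x)=0$ and $g(y)/\|y-x\| = (m/2)\|y-x\| \to 0$, so for any candidate $v$
\[
\frac{\psi_x(y) - \psi_x(x) - \langle v, y-x\rangle}{\|y-x\|} = \frac{\phi(y) - \phi(x) - \langle v, y-x\rangle}{\|y-x\|} + \frac{m}{2}\|y-x\|.
\]
Taking $\liminf$ as $y \to x$ makes the additive correction vanish, and Definition~\ref{def:Fre_epsilon_subdifferential} then gives $v \in \partial\phi(x) \iff v \in \partial\psi_x(x)$.

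Second, I would invoke the classical fact that for a proper lower semicontinuous convex function the Frechet subdifferential coincides with the usual (global) convex subdifferential at every point of the effective domain. Applied to $\psi_x$ at $x$, this yields
\[
v \in \partial\psi_x(x) \iff \psi_x(y) \geq \psi_x(x) + \langle v, y-x\rangle \quad \forall y \in \R^n.
\]
Substituting $\psi_x(y) = \phi(y) + (m/2)\|y-x\|^2$ and using $\psi_x(x)=\phi(x)$ reproduces the global quadratic minorant inequality in \eqref{eq:key_chara}, and combining with the first step closes the chain.

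The only non-routine ingredient is the folklore used in the second step, so this is where I expect to spend the most care. A short self-contained justification runs as follows: for a Frechet subgradient $v$ of $\psi_x$ at $x$, any $\varepsilon>0$, any $y\neq x$, and all sufficiently small $t \in (0,1)$, the Frechet condition gives $\psi_x(x+t(y-x)) \geq \psi_x(x) + t\langle v, y-x\rangle - \varepsilon t\|y-x\|$; combining this with the convexity inequality $\psi_x(x+t(y-x)) \leq (1-t)\psi_x(x) + t\psi_x(y)$, dividing by $t$, and letting $\varepsilon \downarrow 0$ produces the desired global minorant. Every remaining step is direct bookkeeping from the definitions.
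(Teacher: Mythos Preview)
Your argument is correct and follows essentially the same underlying route as the paper: both rely on the fact that adding the smooth quadratic $(m/2)\|\cdot-x\|^2$ does not change the Frechet subdifferential at $x$, together with the coincidence of Frechet and convex subdifferentials for the convex function $\phi_m(\cdot;x)$. The only difference is one of presentation: the paper outsources \eqref{eq:key_chara} to Lemma~2.1 of \cite{davis2019stochastic} and then reads off \eqref{eq:key_chara1} from the definitions, whereas you give a self-contained proof (and in the reverse order, establishing \eqref{eq:key_chara1} first); your explicit justification of the Frechet-equals-convex step is a nice addition that the paper simply assumes via the citation.
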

\begin{proof}
The proof of \eqref{eq:key_chara} can be found for example in Lemma 2.1 of \cite{davis2019stochastic}.
Moreover, it follows from the definition of the subdifferential in \eqref{def:subdif} with $\varepsilon = 0$ and the definition of $\phi_m(\cdot;x)$ in \eqref{def:phim} that \eqref{eq:key_chara1} is equivalent to \eqref{eq:key_chara}.
\end{proof}



We now introduce the
definition of a regularized
stationary point of a
closed $m$-weakly convex function
$\phi$ which is the one
used by the main algorithm of this paper.


\begin{definition}[Regularized stationary point]
\label{def:appr}
For a pair $(\bar\eta,\bar \varepsilon) \in \R^2_{++} $, a point $x\in \dom \phi$ is called a $(\bar \eta,\bar\varepsilon;m)$-regularized stationary point  of $\phi$
if there exists a pair $(w,\varepsilon) \in \R^n \times \R_{++}$ such that 
\begin{equation}
  w \in  \partial_\varepsilon \left[\phi_{m}(\cdot;x) \right]  (x), \quad
\|w\| \le \bar \eta , \quad \ \varepsilon \le \bar \varepsilon.
\label{eq:app_sol}
\end{equation}
\end{definition}

We make two trivial remarks about the above definition. First,
if $(\bar \eta,\bar \varepsilon)=(0,0)$ then
it follows from 
\eqref{eq:key_chara1}
and Definition \ref{def:appr}
that
$x$ is a  $(\bar \eta,\bar \varepsilon;m)$-regularized stationary point if and only if
$x$ is an exact stationary point of \eqref{eq:opt_problem}, i.e.,
it satisfies $0 \in \partial \phi(x)$.
Second, when $m=0$ (and hence $\phi$ is convex),
the inclusion in
\eqref{eq:app_sol} reduces to $w \in \partial_{ \varepsilon} \phi(x)$, and the above notion
reduces to a familiar one which has already been used in the analysis of several
algorithms, including proximal bundle ones (e.g., see Section 6 of \cite{liang2021proximal}), for solving the convex version of \eqref{eq:opt_problem}.

The verification that $x$ is a $(\bar \eta,\bar\varepsilon;m)$-regularized stationary point
 requires exhibiting a pair of residuals
 $(w,\varepsilon)$ satisfying the inclusion in \eqref{eq:app_sol},
 which is generally not an immediate task. However, many algorithms for solving the convex version of \eqref{eq:opt_problem} and the one in this paper, are able to generate not only a sequence of iterates $\{x_k\}$ but also a sequence of corresponding residuals
 $\{(w_k,\varepsilon_k)\}$
 such that $(x,w,\varepsilon)=(x_k,w_k,\varepsilon_k)$
 satisfies the inclusion in \eqref{eq:app_sol},
 so that verification that
 $x_k$ is a $(\bar \eta,\bar\varepsilon;m)$-regularized stationary point simply amounts to checking the two inequalities in \eqref{eq:app_sol}.



Before stating the next notion of a stationary point based on the Moreau envelope, we introduce a slightly different
notation for the Moreau envelope which is more suitable for our presentation,
namely:
for any $\lam>0$ and $x \in \R^n$, let
\begin{equation}\label{eq:Moreau}
  \hat M^{\lam}(x) :=
\min_u \left\{ \phi(u) + \frac{\lam^{-1}+ m}{2 }\|u-x\|^2 \right\}.   
\end{equation}
Note that the above definition depends on $m$
 but, for simplicity, we have omitted this dependence from
 its notation
 since the parameter $m$ is assumed constant throughout our analysis in this paper.
 
 The  gradient formula for the Moreau envelope (see Section 1 in \cite{davis2019stochastic}) is as follows:
\begin{equation}\label{eq:new_grad}
     \nabla  \hat M^{\lam}(x) = \left(\frac{1}{\lam}+m\right) \left(x - \hat x^\lam(x) \right)
\end{equation}
where 
\begin{equation}\label{eq:hatx}
    \hat x^\lam(x) := \argmin_{u \in \R^n} \left\{\phi(u)+ \frac{\lam^{-1}+m}2\|u -  x\|^2\right\}.
\end{equation}

The following definition describes another notion of a stationary point that is based on the aforementioned Moreau envelope.
\begin{definition}[Moreau stationary point]
    For any $\varepsilon_M >0$ and $\lam>0$, a point $x\in \dom \phi$ is called a $(\varepsilon_M;\lam)$-Moreau stationary point if 
    $\|\nabla \hat M^{\lam}(x)\| \le \varepsilon_M$.
\label{def:Moreau}
\end{definition}


 

The next result,
whose proof is given in Appendix \ref{App:relation}, describes the relationship between directional stationary points and Moreau stationary points.


\begin{proposition}
Assume $\lam >0$ and $\phi$ is a $m$-weakly convex function. Then, the following statements hold:
\begin{itemize}
    \item [a)]
    if $x$ is a $(\varepsilon_D,\delta_D)$-directional stationary point then $x$ is a $\left(\varepsilon_M;\lam \right)$-Moreau stationary point where
    \[
    \varepsilon_M = \left(m+\frac{1}{\lam}\right)\left[(3+2\lam m)\delta_D + 2\lam \varepsilon_D \right];
    \]
    \item [b)]
    if $x$ is a $(\varepsilon_M;\lam)$-Moreau stationary point then $x$ is a $(\varepsilon_M,\varepsilon_M/(m+\lam^{-1}))$-directional stationary point.
\end{itemize}
\label{prop:relation}
\end{proposition}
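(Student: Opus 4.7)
The plan is to treat the two directions separately. For part~(b), given $\|\nabla \hat M^\lambda(x)\| \le \varepsilon_M$, I would take $\tilde x := \hat x^\lambda(x)$; the identity~\eqref{eq:new_grad} yields $\|x - \tilde x\| \le \varepsilon_M/(m+\lambda^{-1})$ immediately, and the first-order optimality of $\tilde x$ for the $(1/\lambda)$-strongly convex problem defining $\hat x^\lambda(x)$ produces $v := (\lambda^{-1}+m)(x-\tilde x) \in \partial \phi(\tilde x)$ via Proposition~\ref{prop:Frechet relation}, with $\|v\| = \|\nabla \hat M^\lambda(x)\| \le \varepsilon_M$. Lemma~\ref{lem:directional_deri} then delivers the required directional-derivative bound.

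For part~(a), the pivotal step is to convert the directional-derivative hypothesis at $\tilde x$ into a global inequality on $\phi$. Exploiting weak convexity, I would observe that for any unit vector $d$, the scalar map $s \mapsto \phi(\tilde x + sd) + (m/2)s^2$ is convex on $[0,\infty)$ (as the restriction of the convex function $u \mapsto \phi(u) + (m/2)\|u - \tilde x\|^2$ to a ray), and its right derivative at $s=0$ coincides with $\phi'(\tilde x; d) \ge -\varepsilon_D$. The standard convex first-order inequality then yields
\[
\phi(y) \ge \phi(\tilde x) - \varepsilon_D \|y - \tilde x\| - \frac{m}{2}\|y - \tilde x\|^2 \qquad \text{for every } y \in \R^n.
\]
With this in hand, I would let $\hat x := \hat x^\lambda(x)$, apply Proposition~\ref{prop:Frechet relation} at $\hat x$ with $w := (\lambda^{-1}+m)(x - \hat x) \in \partial \phi(\hat x)$ evaluated at the test point $y=\tilde x$, and sum that subgradient inequality with the global inequality above evaluated at $y = \hat x$. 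The $\phi$-terms cancel, leaving $\langle w, \tilde x - \hat x\rangle \le m\|\tilde x - \hat x\|^2 + \varepsilon_D \|\tilde x - \hat x\|$. Substituting $w$, splitting $x - \hat x = (x-\tilde x) + (\tilde x - \hat x)$, applying Cauchy--Schwarz, and using $\|x-\tilde x\| \le \delta_D$ should collapse this to $\|\tilde x - \hat x\| \le (1+\lambda m)\delta_D + \lambda \varepsilon_D$. One pass of the triangle inequality then gives $\|x - \hat x\| \le (2+\lambda m)\delta_D + \lambda \varepsilon_D$, and multiplication by $\lambda^{-1}+m$ bounds $\|\nabla \hat M^\lambda(x)\|$ by a quantity no larger than the claimed $\varepsilon_M$.

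The main obstacle---indeed the only non-routine ingredient---is the passage from directional stationarity to the displayed global inequality. Lemma~\ref{lem:directional_deri} only goes the wrong way (from subgradients to directional derivatives), and directly asserting the existence of a subgradient of norm $\le \varepsilon_D$ at $\tilde x$ would appear to need extra regularity or a minimax argument. The weak-convexity ray-restriction trick sketched above is what permits a clean derivation requiring no subgradient-existence hypothesis, and it is the essential step of the proof.
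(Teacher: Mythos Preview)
Your proposal is correct. Part~(b) is essentially the paper's argument: the paper also takes $\tilde x = \hat x^\lambda(x)$ and shows $\phi'(\hat x;d)\ge -\varepsilon_M$ directly via $\psi'(\hat x;d)\ge 0$ for the convex $\psi(\cdot)=\phi(\cdot)+\tfrac{\lambda^{-1}+m}{2}\|\cdot-x\|^2$, whereas you route through $v\in\partial\phi(\hat x)$ and Lemma~\ref{lem:directional_deri}; these are interchangeable.

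Part~(a) is where you genuinely diverge. The paper works entirely with the strongly convex $\psi$: it computes $\psi'(\tilde x;d)\ge -[\varepsilon_D+(\lambda^{-1}+m)\delta_D]\,\|d\|$, applies convexity to get $\psi(\hat x)-\psi(\tilde x)\ge\psi'(\tilde x;\hat x-\tilde x)$, and combines with the $(1/\lambda)$-strong-convexity bound $\psi(\tilde x)-\psi(\hat x)\ge\tfrac{1}{2\lambda}\|\tilde x-\hat x\|^2$ to obtain $\|\tilde x-\hat x\|\le 2\lambda\varepsilon_D+(2+2\lambda m)\delta_D$. Your ray-restriction step is equivalent in spirit to the paper's convexity-of-$\psi$ step (both turn the directional hypothesis into a global lower bound on a convexification of $\phi$), but you then pair it with the \emph{exact} optimality condition $w=(\lambda^{-1}+m)(x-\hat x)\in\partial\phi(\hat x)$ from Proposition~\ref{prop:Frechet relation} rather than with the cruder strong-convexity inequality. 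This buys you the sharper estimate $\|\tilde x-\hat x\|\le (1+\lambda m)\delta_D+\lambda\varepsilon_D$, and hence $\|\nabla\hat M^\lambda(x)\|\le (m+\lambda^{-1})[(2+\lambda m)\delta_D+\lambda\varepsilon_D]$, which is strictly smaller than the stated $\varepsilon_M$. So your route is not only valid but yields a better constant; the paper's approach is slightly more streamlined in that it uses a single auxiliary function $\psi$ throughout and never needs to invoke the subgradient characterization at $\hat x$.
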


The following result, whose  proof is given in Appendix \ref{App:relation},
provides an equivalent characterization of
a directional stationary   point
in terms of
the subdifferential of $\phi$.

\begin{proposition}\label{prop:subdir}
Assume that $\phi$ is a $m$-weakly convex function. Then,
        $x$ is a $(\varepsilon_D,\delta_D)$-directional stationary point if and only if  there exists $\tilde x \in \dom \phi$ such that 
        \[
  \|x- \tilde x\|\le \delta_D, \quad \operatorname{dist}(0 ; \partial \phi(\tilde x)) \le \varepsilon_D.
\]
\end{proposition}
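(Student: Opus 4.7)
Plan. The proof reduces to the pointwise equivalence
\[
\inf_{\|d\|\le 1}\phi'(\tilde x;d)\ge -\varepsilon_D \ \iff\ \operatorname{dist}\bigl(0;\partial\phi(\tilde x)\bigr)\le\varepsilon_D
\]
for $\tilde x\in\dom\phi$, since the nearness condition $\|x-\tilde x\|\le\delta_D$ appears identically in both formulations. The ($\Leftarrow$) direction is immediate from Lemma \ref{lem:directional_deri}: choose $v^\star\in\partial\phi(\tilde x)$ attaining the distance (which exists since this subdifferential is closed and convex by Proposition \ref{prop:Fre_sub}), then the lemma yields $\inf_{\|d\|\le 1}\phi'(\tilde x;d)\ge -\|v^\star\|\ge -\varepsilon_D$.

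For ($\Rightarrow$), I reduce to standard convex analysis by setting $g:=\phi_m(\cdot;\tilde x)$. Identity \eqref{eq:key_chara1} from Proposition \ref{prop:Fre_sub} gives $\partial g(\tilde x)=\partial\phi(\tilde x)$, and a direct calculation of the one-sided difference quotient shows $g'(\tilde x;d)=\phi'(\tilde x;d)$, because the quadratic regularizer contributes a term $(m/2)t\|d\|^2$ that vanishes in the limit. Since $d\mapsto g'(\tilde x;d)$ is sublinear (convex and positively homogeneous), positive homogeneity upgrades the hypothesis $g'(\tilde x;d)\ge -\varepsilon_D$ valid on the unit ball to the global estimate $g'(\tilde x;d)\ge -\varepsilon_D\|d\|$ for every $d\in\R^n$. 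Equivalently, the convex function $q(d):=g'(\tilde x;d)+\varepsilon_D\|d\|$ is nonnegative and vanishes at $d=0$, so $0\in\partial q(0)$.

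I finish by unpacking this inclusion. Applying the Moreau--Rockafellar sum rule (justified since $\varepsilon_D\|\cdot\|$ is finite and continuous on $\R^n$, making the relative-interior intersection condition automatic) together with the standard identity $\partial g'(\tilde x;\cdot)(0)=\partial g(\tilde x)$, valid for any proper convex $g$, one obtains $0\in\partial g(\tilde x)+\varepsilon_D\bar B$, where $\bar B$ denotes the closed unit Euclidean ball. This yields some $v\in\partial g(\tilde x)=\partial\phi(\tilde x)$ with $\|v\|\le\varepsilon_D$, giving $\operatorname{dist}(0;\partial\phi(\tilde x))\le\varepsilon_D$. The main technical care lies in invoking the sum rule for the potentially non-lower-semicontinuous sublinear function $g'(\tilde x;\cdot)$; the finiteness of the norm everywhere makes this routine. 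An alternative route, equally viable, replaces the sum rule by a strict separation of $\partial g(\tilde x)$ from $\varepsilon_D\bar B$ inside the identity $\operatorname{cl} g'(\tilde x;\cdot)=\sigma_{\partial g(\tilde x)}$.
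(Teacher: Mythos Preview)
Your proof is correct. Both directions go through as written: the $(\Leftarrow)$ direction follows cleanly from Lemma~\ref{lem:directional_deri} and the existence of a nearest point in the closed nonempty set $\partial\phi(\tilde x)$, and the $(\Rightarrow)$ direction is a valid application of the Moreau--Rockafellar sum rule together with the standard identification $\partial[g'(\tilde x;\cdot)](0)=\partial g(\tilde x)$ for proper convex $g$.

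The paper takes a different and somewhat more economical route. Rather than treating the two implications separately, it proves the \emph{exact identity}
\[
-\inf_{\|d\|\le 1}\phi'(\tilde x;d)=\operatorname{dist}\bigl(0;\partial\phi(\tilde x)\bigr)
\]
in one stroke: it passes to the convex regularization $\psi=\phi_{m}(\cdot;\tilde x)$ (your $g$), invokes the max formula $\psi'(\tilde x;d)=\sigma_{\partial\psi(\tilde x)}(d)$ from Beck, and then uses the elementary minimax computation $-\inf_{\|d\|\le 1}\sup_{s\in C}\langle s,d\rangle=\inf_{s\in C}\|s\|$. This is essentially the ``alternative route'' you allude to at the end of your proposal. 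What the paper's approach buys is brevity and a sharper statement (equality, not just two inequalities); what your approach buys is self-containment within the paper's own lemmas (you use Lemma~\ref{lem:directional_deri} rather than an external max-formula citation) at the cost of invoking the sum rule and handling each direction separately.
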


The following result, whose  proof is given in Appendix \ref{App:relation},
shows that a
regularized stationary point is
both a 
directional stationary point and a Moreau stationary point.

\begin{proposition}\label{key:rela}
If $x$ is a $(\bar \eta,\bar \varepsilon;m)$-regularized stationary point, then the following statements hold:
\begin{itemize}\label{prop:ours}
    \item[a)] $x$ is a $(\bar \eta + 2 \sqrt{2 m \bar \varepsilon }, \sqrt{2\bar \varepsilon/m})$-directional stationary point;
    \item[b)]
    $x$ is a $(18\sqrt{2m\bar \varepsilon}+4\bar \eta;1/m)$-Moreau stationary point.
\end{itemize}
\end{proposition}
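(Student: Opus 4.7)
The plan is to prove (a) first by converting the $\varepsilon$-subgradient $w$ of the convex function $g := \phi_m(\cdot;x)$ guaranteed by Definition~\ref{def:appr} into an exact Frechet subgradient of $\phi$ at a nearby point, and then to deduce (b) from (a) combined with Proposition~\ref{prop:relation}(a). The main tool for (a) will be a Br\o{}nsted-Rockafellar-type refinement, which I expect to be one of the two technical results promised in Appendix~\ref{APP:sub}.

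By Definition~\ref{def:appr} there exists a pair $(w,\varepsilon)$ with $\varepsilon \le \bar\varepsilon$, $\|w\|\le\bar\eta$, and $w \in \partial_\varepsilon g(x)$, where $g:=\phi_m(\cdot;x)$ is convex since $\phi$ is $m$-weakly convex. The refinement says: for any $\rho>0$, there exist $(\tilde x,\tilde w)$ with $\tilde w \in \partial g(\tilde x)$, $\|\tilde x - x\|\le \sqrt{2\varepsilon/\rho}$, and $\|\tilde w - w\|\le \sqrt{2\rho\varepsilon}$; it is obtained by taking $\tilde x$ to be the unique minimizer of the strongly convex function $u\mapsto g(u)-\langle w,u\rangle + (\rho/2)\|u-x\|^2$ and reading off the distance bound from $w\in\partial_\varepsilon g(x)$ and strong convexity, and the subgradient bound from the first-order optimality condition. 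Taking $\rho=m$ produces $\|\tilde x - x\|\le \sqrt{2\bar\varepsilon/m}$ and $\|\tilde w - w\|\le \sqrt{2m\bar\varepsilon}$.

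Since $g(u)=\phi(u)+(m/2)\|u-x\|^2$ has a smooth quadratic part, the sum rule gives $\partial g(\tilde x) = \partial\phi(\tilde x) + m(\tilde x - x)$, so $\tilde v := \tilde w - m(\tilde x - x) \in \partial\phi(\tilde x)$. The triangle inequality then yields
\[
\|\tilde v\| \;\le\; \|w\| + \|\tilde w - w\| + m\|\tilde x - x\| \;\le\; \bar\eta + \sqrt{2m\bar\varepsilon} + \sqrt{2m\bar\varepsilon} \;=\; \bar\eta + 2\sqrt{2m\bar\varepsilon}.
\]
Applying Lemma~\ref{lem:directional_deri} to $\tilde v \in \partial\phi(\tilde x)$ gives $\inf_{\|d\|\le 1}\phi'(\tilde x;d)\ge -\|\tilde v\|$, and together with $\|x-\tilde x\|\le\sqrt{2\bar\varepsilon/m}$ this verifies the definition of a $(\bar\eta+2\sqrt{2m\bar\varepsilon},\sqrt{2\bar\varepsilon/m})$-directional stationary point, proving (a).

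For (b), apply Proposition~\ref{prop:relation}(a) with $\lambda=1/m$ and the pair $(\varepsilon_D,\delta_D)=(\bar\eta+2\sqrt{2m\bar\varepsilon},\sqrt{2\bar\varepsilon/m})$ produced in (a). With $\lambda=1/m$ the factor $m+\lambda^{-1}=2m$ and $3+2\lambda m = 5$, so the formula simplifies to $\varepsilon_M = 2m[5\delta_D + (2/m)\varepsilon_D] = 10m\delta_D + 4\varepsilon_D$. Substituting gives $10m\sqrt{2\bar\varepsilon/m} + 4\bar\eta + 8\sqrt{2m\bar\varepsilon} = 18\sqrt{2m\bar\varepsilon} + 4\bar\eta$, exactly the claimed bound. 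The only substantive step is the Br\o{}nsted-Rockafellar-type refinement used in (a); everything else is a triangle-inequality estimate and an invocation of two earlier results.
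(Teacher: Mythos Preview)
Your proof is correct and is essentially the same argument as the paper's, just packaged differently. The paper does not invoke a general Br\o{}nsted--Rockafellar refinement; instead it first observes that $w\in\partial_\varepsilon[\phi_m(\cdot;x)](x)$ implies $w\in\partial_\varepsilon[\phi_{2m}(\cdot;x)](x)$ (since $\phi_{2m}(\cdot;x)\ge\phi_m(\cdot;x)$ with equality at $x$), then uses the $m$-strong convexity of $\phi_{2m}(\cdot;x)$ and Lemma~\ref{lem:mu convex} to produce the nearby point $\tilde x$, and finally shifts the subgradient via Lemma~\ref{lem:chara_weakly} and Proposition~\ref{prop:Fre_sub} to land in $\partial\phi(\tilde x)$. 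But note that your penalized problem $g(\cdot)-\langle w,\cdot\rangle+(m/2)\|\cdot-x\|^2$ with $g=\phi_m(\cdot;x)$ is exactly $\phi_{2m}(\cdot;x)-\langle w,\cdot\rangle$, so your $\tilde x$ coincides with the paper's, and your $\tilde v=\tilde w-m(\tilde x-x)=w-2m(\tilde x-x)$ coincides with the paper's $\hat w$. The only cosmetic difference is that the paper reaches the directional stationary conclusion via Proposition~\ref{prop:subdir} rather than Lemma~\ref{lem:directional_deri}; part~(b) is identical in both.
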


\section{Algorithm}
\label{Sec:PBF}

This section contains four subsections. The first one describes the main problem and the assumptions made on it.
The second one reviews the deterministic version of stochastic proximal subgradient
 method of \cite{davis2019stochastic} and its main complexity result.
 The third one describes the  proximal bundle framework (PBF) and describes its main complexity result. The last one  presents two special instances of PBF.

\subsection{Problem description and main assumptions}
\label{sec:assumptions}

The main problem of this paper is described in \eqref{eq:opt_problem}
where the functions $f$ and $h$ are assumed to satisfy:
\begin{itemize}
    \item[(A1)] functions $f,h:\mathbb{R}^{n} \rightarrow \mathbb{R} \cup\{\infty\}$ and
    a scalar $m>0$ such that
    $h \in \bConv{n}$,
    $f$ is $m$-weakly convex
    and $\dom h \subset \dom f$;

	\item[(A2)]
		there exist constants
		$M, L \ge 0$ and
		a subgradient oracle,
		 i.e.,
		a function $f':\dom h \to \R^n$
		satisfying $f'(u) \in \partial f(u)$ for every $u \in \dom h$, such that
		\begin{equation}
		   	\|f'(u)-f'(v)\| \le 2M + L \|u-v\| \quad\forall u,v \in \dom h;
		\label{cond:A2} 
		\end{equation}
		
		\item[(A3)]
		$\phi^*:= \inf \{ \phi(u) : u \in \R^n \}$ is finite.
  \end{itemize}


	Let ${\cal C}(M,L)$ denote the class of functions $f$ satisfying Assumption (A2).
		Even though ${\cal C}(M,L)$ depends on $\dom h$, we have omitted this dependence from its notation. For any function
  $g \in {\cal C}(M,L)$ and $x \in \dom h$, we denote the linearization of $g$ at
  $x$ by
  \begin{equation}
\label{def:l_g}
  \ell_g(\cdot;x) := g(x) + \inner{g'(x)}{\cdot-x}.
  \end{equation}

		
		

	\begin{lemma}
 \label{lem:Lip_ineq}
Assume that $f\in {\cal C}(M,L)$ for some $(M,L) \in \R^2_+$ and $f$ is $m$-weakly convex
on $\dom h$.
Then, for every $z \in \R^n$ and $\tz \in \dom h$, we have
\[
f'(\tz) + m(\tz-z) \in \partial [ f_m(\cdot;z) ](\tz), \qquad
f_m(\cdot;z) \in 
\bConv{n} \cap {\cal C}(M,L+m).
\]
\end{lemma}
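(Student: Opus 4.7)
The plan is to verify the two claims essentially as direct consequences of the $m$-weak convexity of $f$ and the hybrid condition on its subgradient oracle, combined with the characterization in Proposition~\ref{prop:Fre_sub}.

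For the subdifferential inclusion, I would first observe that the characterization \eqref{eq:key_chara} applied to $f$ (which is $m$-weakly convex) and to the element $f'(\tilde z) \in \partial f(\tilde z)$ yields the weak subgradient inequality
\[
f(y) \ge f(\tilde z) + \langle f'(\tilde z), y - \tilde z\rangle - \frac{m}{2}\|y-\tilde z\|^2 \qquad \forall y \in \R^n.
\]
Next, I would add $\frac{m}{2}\|y-z\|^2$ to both sides and use the algebraic identity
\[
\|y-z\|^2 = \|\tilde z - z\|^2 + 2\langle \tilde z - z, y - \tilde z\rangle + \|y - \tilde z\|^2
\]
to cancel the $-\frac{m}{2}\|y-\tilde z\|^2$ term and collect the cross term $m\langle \tilde z - z, y - \tilde z\rangle$ into the linear part. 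The resulting inequality is exactly
\[
f_m(y;z) \ge f_m(\tilde z;z) + \langle f'(\tilde z) + m(\tilde z - z), y - \tilde z\rangle,
\]
which, since $f_m(\cdot;z)$ will be shown to be convex, implies $f'(\tilde z) + m(\tilde z - z) \in \partial [f_m(\cdot;z)](\tilde z)$.

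For the second assertion, convexity of $f_m(\cdot;z)$ follows by writing $f_m(\cdot;z) = [f + \frac{m}{2}\|\cdot\|^2] + A(\cdot)$, where $A(\cdot) := -m\langle z,\cdot\rangle + \frac{m}{2}\|z\|^2$ is affine: the bracketed term is convex by $m$-weak convexity of $f$, and convexity is preserved under adding an affine function. Properness and lower semicontinuity are inherited from $f$. To check $f_m(\cdot;z) \in {\cal C}(M, L+m)$, I would designate the natural oracle $u \mapsto f'(u) + m(u-z)$, which lies in $\partial [f_m(\cdot;z)](u)$ by the first part; then the triangle inequality together with \eqref{cond:A2} gives
\[
\|[f'(u) + m(u-z)] - [f'(v) + m(v-z)]\| \le \|f'(u)-f'(v)\| + m\|u-v\| \le 2M + (L+m)\|u-v\|,
\]
as required.

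There is no real obstacle here: both claims reduce to a single square-completion identity combined with the standard triangle inequality. The only thing requiring any care is ensuring that one invokes Proposition~\ref{prop:Fre_sub} at the right moment to pass between the Fréchet subdifferential of the weakly convex $f$ and the (convex) subdifferential of $f_m(\cdot;z)$, so that the inequality obtained after square-completion can be interpreted as a genuine convex subgradient inequality.
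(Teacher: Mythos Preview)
Your proposal is correct and follows essentially the same approach as the paper. The only cosmetic difference is that the paper packages the square-completion identity you write out explicitly into the appendix Lemma~\ref{lem:chara_weakly} (applied with $(\varepsilon,x,c)=(0,\tilde z,z)$), and then combines it with $f'(\tilde z)\in\partial f(\tilde z)=\partial[f_m(\cdot;\tilde z)](\tilde z)$ from Proposition~\ref{prop:Fre_sub}; the triangle-inequality verification of the $(M,L+m)$-hybrid condition is identical.
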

\begin{proof}
Since $f\in {\cal C}(M,L)$, there exists an oracle $f'$ satisfying
the conditions in (A2), i.e.,
$f'(x) \in \partial f(x)$ for every $x \in \dom f$ and
\eqref{cond:A2} holds.
Since $f$ is $m$-weakly convex, it follows from the definition of
weak convexity that 
$f_m \in \bConv{n}$. Moreover,
it follows from Lemma \ref{lem:chara_weakly} with $(\varepsilon,x,c) = (0,\cdot,z)$ and
the fact that
$f'(x) \in \partial f(x)$ for every $x \in \dom f$
that $f_m'(\cdot;z) := f'(\cdot) + m (\cdot - z) \in \partial [f_m(\cdot;z)](\cdot)$.
The result now follows by noting that
\eqref{cond:A2} and the definition
of $f_m+1/\lam$ imply that
for every $x,y \in \dom f$
\begin{align*}
\|f_m'(x;z) - f_m'(y;z)\| &= \|f'(x) - f'(y) + m(x-y)\| \\
&\le \|f'(x) - f'(y)\| + m\|x - y\|\\
&\le 2M + (L+m)\|x-y\|
\end{align*}
which means that $f_m(\cdot;z) \in {\cal C}(M,L+m)$.
\end{proof}

In view of the first inclusion of
Lemma \ref{lem:Lip_ineq},
it follows that
for any $z \in \R^n$ and $\tz \in \dom h$, function $f_m(\cdot;z)$ admits the linearization
given by
\beq \label{def:linear}
\ell_{f_m(\cdot;z)} (\cdot;\tz) :=
 f_m(\tz;z) + \inner{f'(\tz)+m(\tz-z)\,}{\,\cdot-\tz}.
\eeq
Moreover, in view of the second inclusion of Lemma \ref{lem:Lip_ineq}, we have
\beq \label{eq:fm-linear}
    0 \le   f_{m}(\cdot;z) - 
    \ell_{f_m(\cdot;z)} (\cdot;\tz)
    \le    2 M \|\cdot-\tz\| + \frac{ L + 
    m}2 \|\cdot-\tz\|^2 \quad \forall z \in \R^n,\, \tz \in \dom h.
	\eeq
 Note that the above observations with
 $\tz=z$ implies that for every $z \in \dom h$,
 we have
 
 \begin{equation}\label{eq:simlin}
   \ell_{f_m(\cdot;z)} (\cdot;z) = \ell_f(\cdot;z), \quad    
 \end{equation}
 and
 \beq \label{eq:fm-linear1}
   0 \le   f_{m}(\cdot;z) - 
    \ell_{f} (\cdot;z)
    \le     2M \|\cdot-z\| + \frac{ L + 
    m}2 \|\cdot-z\|^2 \quad \forall z \in \dom h.
	\eeq


        \subsection{ Review of the PS
         method for the weakly convex case}\label{SEC:SCS}

This subsection reviews the
deterministic version of
the PS method of~\cite{davis2019stochastic}.

More specifically, it considers
the PS method
described below
under the assumptions stated in
Subsection~\ref{sec:assumptions} 
except that the  constant $L$
in condition (A2) is assumed to be zero.



\noindent\rule[0.5ex]{1\columnwidth}{1pt}
	
	PS

    \noindent\rule[0.5ex]{1\columnwidth}{1pt}
    {\bf Input:} $\hat x_0 \in \dom h$, a sequence of $\left\{\alpha_t\right\}_{t \geq 0} \subset \mathbb{R}_{+}$ and iteration count $T$.\\
    {\bf Step:} For $t = 0,1,\cdots, T$, compute
\begin{equation}
 \hat x_{t+1}=\argmin_{u \in \R^n} \left\{\ell_{f}(u;\hat x_t)+ h(u)+ \frac{1}{2\alpha_t}\|u -  \hat x_t\|^2\right\}.
\end{equation}
    \noindent\rule[0.5ex]{1\columnwidth}{1pt}

The following result (see Theorem 3.4 in \cite{davis2019stochastic}) describes the rate
of convergence of the PS method.

\begin{proposition}
\label{prop:Davis}
Assume $(f,h)$ are functions satisfying conditions (A1)-(A3)
with $L=0$ and
assume that  $\alpha_t \in(0,1 / \bar{m}]$ for every $t \ge 0$ for some
$\bar m \in (m,2m]$. Then,
the iterates $\hat x_t$ of
PS satisfies
\begin{equation}\label{Moreacomplexity}
\frac{\sum_{t=0}^T \alpha_t \left\|\nabla \hat M^{1 / (\bar{m}-m)}\left(\hat x_t\right)\right\|^2}{\sum_{t=0}^T \alpha_t}  \leq \frac{\bar{m}}{\bar{m}-m} \cdot \frac{\left(\hat M^{1 / (\bar{m}-m)}\left(\hat x_0\right)- \phi^*\right)+2 \bar{m} M^2 \sum_{t=0}^T \alpha_t^2}{\sum_{t=0}^T \alpha_t} .
\end{equation}
As a consequence, for any
given tolerance $\rho>0$ and constant $\gamma $ such that
$\gamma \in(0, 1/(2m)]$,
if the stepsizes $\alpha_t$ are chosen according to
$\alpha_t = \gamma/\sqrt{T+1}$
for every $t \ge 0$ and the iteration count $T$ satisfies
\begin{equation}\label{eq:MoreauT}
  T \ge \frac{\left[\left(\hat M^{1/m}\left(\hat x_0\right)- \phi^*\right)+4 m M^2 \gamma^2\right]^2}{\gamma^2 \rho^4},
\end{equation}
then
one of the iterates
$\hat x_t \in 
\{ \hat x_0, \ldots, \hat x_T\}$
of PS
satisfies
$\left\|\nabla \hat M^{1 / m}\left(\hat x_t\right)\right\| \le \rho$.
\end{proposition}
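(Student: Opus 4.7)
The plan is to derive a one-step descent inequality for the Moreau envelope $\hat M^{\lam}$ with $\lam = 1/(\bar m - m)$ along the PS iterates, telescope it to obtain \eqref{Moreacomplexity}, and then specialize the resulting average bound to derive the $\rho$-Moreau-stationarity complexity.

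For the descent step, let $\tilde x_t := \hat x^{\lam}(\hat x_t)$ be the proximal point from \eqref{eq:hatx}. Using $\tilde x_t$ as a feasible test point in the minimum defining $\hat M^{\lam}(\hat x_{t+1})$, together with the identity $\hat M^{\lam}(\hat x_t) = \phi(\tilde x_t) + (\bar m/2)\|\tilde x_t - \hat x_t\|^2$, gives
\[
\hat M^{\lam}(\hat x_{t+1}) - \hat M^{\lam}(\hat x_t) \le \frac{\bar m}{2}\bigl(\|\tilde x_t - \hat x_{t+1}\|^2 - \|\tilde x_t - \hat x_t\|^2\bigr).
\]
Since the PS subproblem is $(1/\alpha_t)$-strongly convex, the three-point inequality at $\tilde x_t$, combined with the $m$-weak-convexity bound $\ell_f(\tilde x_t;\hat x_t) \le f(\tilde x_t) + (m/2)\|\tilde x_t - \hat x_t\|^2$, the hybrid Lipschitz estimate $f(\hat x_{t+1}) - \ell_f(\hat x_{t+1};\hat x_t) \le 2M\|\hat x_{t+1} - \hat x_t\|$ coming from (A2) with $L=0$ (cf.\ \eqref{eq:fm-linear1}), and Young's inequality on the remaining cross term, produces
\[
\phi(\hat x_{t+1}) + \frac{1}{2\alpha_t}\|\tilde x_t - \hat x_{t+1}\|^2 \le \phi(\tilde x_t) + \frac{m + 1/\alpha_t}{2}\|\tilde x_t - \hat x_t\|^2 + 2\alpha_t M^2.
\]
Invoking $\phi(\hat x_{t+1}) \ge \hat M^{\lam}(\hat x_{t+1})$ together with the hypothesis $\alpha_t \le 1/\bar m$ (equivalently, $1/\alpha_t \ge \bar m$) to align the coefficient of $\|\tilde x_t - \hat x_{t+1}\|^2$ with the one appearing in the Moreau estimate, and then replacing $\phi(\tilde x_t)$ by $\hat M^{\lam}(\hat x_t) - (\bar m/2)\|\tilde x_t - \hat x_t\|^2$ and using $\|\tilde x_t - \hat x_t\|^2 = \|\nabla \hat M^{\lam}(\hat x_t)\|^2/\bar m^2$ from \eqref{eq:new_grad}, I obtain a descent inequality of the form
\[
\hat M^{\lam}(\hat x_{t+1}) \le \hat M^{\lam}(\hat x_t) - c\,\alpha_t\,\|\nabla \hat M^{\lam}(\hat x_t)\|^2 + 2\bar m \alpha_t^2 M^2
\]
for a positive constant $c$ proportional to $(\bar m - m)/\bar m^2$. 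Telescoping $t=0,\ldots,T$, using $\hat M^{\lam}(\hat x_{T+1}) \ge \phi^*$, and dividing by $\sum_t \alpha_t$ produces \eqref{Moreacomplexity}.

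For the corollary, I would specialize to $\bar m = 2m$ so that $\lam = 1/m$ matches the envelope in the statement. The stepsize $\alpha_t = \gamma/\sqrt{T+1}$ then satisfies $\alpha_t \le \gamma \le 1/(2m) = 1/\bar m$, and yields $\sum_{t=0}^T \alpha_t = \gamma\sqrt{T+1}$ and $\sum_{t=0}^T \alpha_t^2 = \gamma^2$. Substituting into \eqref{Moreacomplexity} and using that the minimum of $\|\nabla \hat M^{1/m}(\hat x_t)\|^2$ over $t=0,\ldots,T$ is at most the weighted average, the condition that this minimum be bounded by $\rho^2$ rearranges to exactly \eqref{eq:MoreauT} after solving for $T$. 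The main obstacle is the descent step: one must track constants through the three-point inequality, the weak-convexity bound, the hybrid Lipschitz estimate, and Young's inequality simultaneously, ensuring that (i) the $\|\tilde x_t - \hat x_{t+1}\|^2$ term absorbs into the $(\bar m/2)\|\tilde x_t - \hat x_{t+1}\|^2$ from the Moreau estimate (exactly where $\alpha_t \le 1/\bar m$ is used), and (ii) the residual coefficient on $\|\tilde x_t - \hat x_t\|^2$ collapses to $(\bar m - m)/2$, at which point \eqref{eq:new_grad} converts it into a multiple of $\|\nabla \hat M^{\lam}(\hat x_t)\|^2$ with the prefactor demanded by \eqref{Moreacomplexity}.
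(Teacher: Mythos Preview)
The paper does not supply its own proof of this proposition; it merely cites Theorem~3.4 of \cite{davis2019stochastic}. Your overall plan---Moreau-envelope descent obtained from the three-point inequality for the PS subproblem, combined with the weak-convexity upper bound on $\ell_f(\tilde x_t;\hat x_t)$ and the hybrid estimate on $f(\hat x_{t+1})-\ell_f(\hat x_{t+1};\hat x_t)$, then telescoped---is exactly the Davis--Drusvyatskiy argument, so in spirit you are aligned with the source the paper defers to.

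There is, however, a genuine gap at the combination step. From your displayed inequality
\[
\phi(\hat x_{t+1}) + \tfrac{1}{2\alpha_t}\|\tilde x_t - \hat x_{t+1}\|^2 \;\le\; \phi(\tilde x_t) + \tfrac{m+1/\alpha_t}{2}\|\tilde x_t - \hat x_t\|^2 + 2\alpha_t M^2,
\]
invoking $\phi(\hat x_{t+1})\ge \hat M^{\lam}(\hat x_{t+1})$ and discarding the $\|\tilde x_t-\hat x_{t+1}\|^2$ term (or replacing its coefficient $1/(2\alpha_t)$ by $\bar m/2$) leaves the coefficient $(m+1/\alpha_t-\bar m)/2$ on $\|\tilde x_t-\hat x_t\|^2$, which is \emph{positive} since $1/\alpha_t\ge \bar m$; no descent follows. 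The way out is to first solve the displayed inequality for $\|\tilde x_t-\hat x_{t+1}\|^2$, substitute this into your first Moreau estimate $\hat M^{\lam}(\hat x_{t+1})-\hat M^{\lam}(\hat x_t)\le (\bar m/2)[\|\tilde x_t-\hat x_{t+1}\|^2-\|\tilde x_t-\hat x_t\|^2]$, and only then use $\phi(\hat x_{t+1})\ge \hat M^{\lam}(\hat x_{t+1})$ on the resulting $-\alpha_t\bar m\,\phi(\hat x_{t+1})$ term to move it to the left. This yields
\[
(1+\alpha_t\bar m)\bigl[\hat M^{\lam}(\hat x_{t+1})-\hat M^{\lam}(\hat x_t)\bigr]\;\le\; -\tfrac{\alpha_t\bar m(\bar m-m)}{2}\|\tilde x_t-\hat x_t\|^2 + 2\bar m\alpha_t^2 M^2,
\]
hence $c=(\bar m-m)/\bigl(2\bar m(1+\alpha_t\bar m)\bigr)$. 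Note this gives $c$ proportional to $(\bar m-m)/\bar m$, not $(\bar m-m)/\bar m^2$ as you wrote; with your stated $c$ the prefactor in \eqref{Moreacomplexity} would be $\bar m^2/(\bar m-m)$ rather than $\bar m/(\bar m-m)$. Even with the corrected combination one is off by a small absolute constant from the exact form of \eqref{Moreacomplexity} and \eqref{eq:MoreauT}, which is fine for a sketch but worth being aware of.
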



We now make some remarks
about PS and Proposition \ref{prop:Davis}.
First, PS always performs $T+1$ iterations
and the second part of Proposition \ref{prop:Davis} gives 
a sufficient condition on
$T$ which guarantees that
one of its iterates is a
$(\rho;1/m)$-Moreau stationary point.
An alternative termination condition based on the magnitude
of $\nabla \hat M^{1 /m}(\hat x_t)$
is not doable since this quantity is generally expensive to compute.
Second, a drawback of the estimate
\eqref{eq:MoreauT} on $T$
is that it is
generally not computable as it depends on $\phi^*$.
Third,
the method of
\cite{davis2019stochastic}
actually outputs
an iterate $\hat x_{t^*}$ where $t^*$
is
sampled from $\{0, \ldots, T\}$ according to the probability mass function $\mathbb{P}(t^*=t)=\alpha_t/\sum_{i=0}^T \alpha_t$ for every $t =0,\ldots,T$.
It turns out that
$\E[ \|\nabla \hat M^{1 /m}(\hat x_{t^*})\|^2]$
is equal to the left hand side of
\eqref{Moreacomplexity} and hence is bounded
by the right hand side of
\eqref{Moreacomplexity}.
The advantage of this approach
is that, without
performing
any evaluation of $\|\nabla \hat M^{1 /m}(\cdot)\| ^2$,
it returns 
an iterate
$\hat x_{t^*}$
such that the expected
value of $\|\nabla \hat M^{1 /m}(\hat x_{t^*}) \|^2$
is bounded by the right hand
side of \eqref{Moreacomplexity}. However, the authors are unaware of any
technique for de-randomizing this output strategy
due to the fact that
the function
$\|\nabla \hat M^{1 /m}(\cdot) \|^2$
 is generally nonconvex and hard to compute. 

 Finally, the third remark in the Concluding Remarks discusses how the alternative inexact proximal point method developed in \cite{davis2019proximally} can
 estimate the gradient of the Moreau envelope evaluated at the center of each prox subproblem after it performs
 a pre-specified number of inner iterations to solve it approximately.

 \subsection{The proximal bundle framework} \label{subsec:update}

 This subsection describes PBF and its main complexity result.
 It also compares the complexity of PBF with that of the deterministic version of the PS method of \cite{davis2019stochastic}
 described in Subsection~\ref{SEC:SCS}.

Before presenting  PBF, we first
provide a brief outline of the ideas behind it.
PBF consists of solving
a sequence of
subproblems of
the form 
\begin{equation}\label{def:subprob}
	    x_j = \underset{u\in \R^n}\argmin \left \{\Gamma_j (u) + \frac{1}{2\lam} \|u-x^c \|^2 \right\}
	\end{equation}
 where  $\Gamma_j$ is a relatively simple convex function 
 minorizing the convexification $\phi_m(\cdot;x^c)$ of $\phi$ 
 defined in \eqref{def:phim} and $x^c$ is a prox center.
 As any classical proximal bundle approach, it updates the center $x^c$ during some iterations (called the serious ones) and keeps the center the same in the other ones (called the null ones).





Following the description of PBF below, we also argue that it can be
viewed as
a specific implementation of
an inexact proximal point method
applied to \eqref{eq:opt_problem}.
The parameters
$m$, $ L$ and $M$  that appear on its description 
are as in Assumptions (A1) and (A2). We start by introducing the following notion of shadow function.

\begin{definition}\label{def:shadow}
    For any convex function $\varphi$, given $x^c \in \R ^n$, $\lam >0$, and $\Gamma(\cdot) \in   {\cal B}(\varphi)$, function $\bar{\Gamma}(\cdot) \in   {\cal B}(\varphi)$ is called a shadow of $\Gamma$ for \eqref{eq:x-pre} if it satisfies
\[
\bar{\Gamma}\left(x\right)=\Gamma\left(x\right), \quad 
x = \underset{u\in \R^n}\argmin \left \{\bar \Gamma (u) + \frac{1}{2\lam} \|u-x^c\|^2 \right\},
\]
where $x$ is the optimal solution of \eqref{eq:x-pre}.
\end{definition}


	\noindent\rule[0.5ex]{1\columnwidth}{1pt}
	
	PBF
	
	\noindent\rule[0.5ex]{1\columnwidth}{1pt}

     \begin{itemize}
		\item [0.] 
    Let initial point $ \hat x_0\in \dom h $, tolerance pair   $(\bar \eta, \bar \varepsilon) \in \R^2_{++}$, 
    and prox stepsize $\lambda>0$
		be given, and set  
		$y_0 = \hat y_0=\hat x_0$, $j=1$, $k=1$,  and
  \beq \label{eq:def-tildem}
  \delta= \min\left\{\frac{\bar \varepsilon}{16},\frac{\lam \bar \eta^2}{64(m\lam+2)}\right\}, \qquad \Gamma_1(\cdot) = \ell_{ f}(\cdot;\hat x_{0}) + h(\cdot);
  \eeq

    \item[1.] Compute the optimal solution $x_j$ and optimal value $\theta_j$ of \eqref{def:subprob} with $x^c = \hat y_{k-1}$;  
		if 
  \begin{equation}\label{def:txj}
 \phi_{m}(x_j;\hat y_{k-1}) + \frac{1}{2\lam} \| x_j -\hat y_{k-1}\|^2 
   < \phi_{ m}(y_{j-1};\hat y_{k-1}) + \frac{1}{2\lam} \| y_{j-1} -\hat y_{k-1}\|^2,	\end{equation}
   then set $y_j=x_j$; else, set $y_j = y_{j-1}$;
    \item [2.]
    Compute 
   \begin{equation}\label{def:wj}
       w_j :=\frac{1}{\lam}(\hat y_{k-1}-x_j) - m( y_j - \hat y_{k-1}), \quad \delta_j:=\delta+ \frac{\lam}{8(m \lam +1)}\| w_j\|^2;
   \end{equation}
		\item[3.] If
\begin{equation}\label{ineq:hpe1}
       t_{j} :=  \phi_{m}(y_j;\hat y_{k-1}) + \frac{1}{2\lam} \| y_j -\hat y_{k-1}\|^2     
       - \theta_{j} > \delta_j,
		\end{equation}
         then go to step 3a; else, go to step 3b;
         \begin{itemize}
         \item[3a)]
        (null update)
find
	    $\Gamma_{j+1}(\cdot) \in    {\cal B}(\phi_m(\cdot;\hat y_{k-1}))$ such that
        \begin{equation}\label{def:Gamma}
\Gamma_{j+1}(\cdot) = \max \left\{\ell_{f_m(\cdot;\hat y_{k-1})}(\cdot;x_j)+h(\cdot), \bar{\Gamma}_j(\cdot)\right\}
\end{equation}
     where $\bar \Gamma_j$ is a shadow of $\Gamma_j$ for \eqref{def:subprob} with $x^c = \hat y_{k-1}$ and go to step 4;
         \item[3b)] (serious update)
         set $\hat x_k=x_j$,
         $\hat y_k = y_j$, $\hat \Gamma_k(\cdot) = \Gamma_j(\cdot)$, $\hat w_k = w_j$, $\hat \delta_k = \delta_j$
         and
         \begin{equation}\label{def:vk}
\hat v_{k} = \frac{1}{\lambda} (\hat y_{k-1} - \hat x_{k}),
\end{equation}
   and      compute
         \begin{equation}\label{def:wk}
\hat \varepsilon_k =\phi_m (\hat y_{k};\hat y_{k-1}) -\hat \Gamma_{k}(\hat x_{k}) - \inner{\hat v_{k}}{\hat y_{k}-\hat x_{k}}
\end{equation}
 if $\|\hat w_k\| \le \bar \eta$ and
 $\hat \varepsilon_k \le \bar \varepsilon$, then stop;
 else 
         find a bundle $\Gamma_{j+1}(\cdot) \in {\cal B}(\phi_m(\cdot;\hat y_k))$
         such that
    
    \begin{equation}
   \Gamma_{j+1}(\cdot)  \ge \ell_f(\cdot;\hat y_k)+ h(\cdot),
\label{eq:serious}
    \end{equation}   
    where $\ell_f(\cdot;\cdot)$ is defined in \eqref{def:l_g}, set $k \leftarrow k+1$, and go to step 4;
        
\end{itemize}
	    

	\item[4.]	set $j\leftarrow j+1$, and go to step 1. 
  \end{itemize}
\rule[0.5ex]{1\columnwidth}{1pt}


An iteration $j$ such that $t_j \le \delta$
     is called
	a serious iteration;
	otherwise, $j$ is called a null iteration.
 Let $
 j_1 \le j_2 \le \ldots$ denote the sequence of all serious iterations and
 let $j_0:=0$.
 Define the $k$-th cycle ${\cal C}_k$ to be the iterations $j$ such that $j_{k-1}+1 \le j \le j_k$, i.e.,
 \begin{equation}\label{def:Ck}
     {\cal C}_k := \{ i_k,\ldots,j_k\}, \quad i_k := j_{k-1}+1.
 \end{equation}
 Hence, only the last 
 iteration of a cycle (which can be the first one
 if ${\cal C}_k$ contains only one iteration)  
 is serious.

 We now make some basic remarks about PBF.
 First, PBF is referred to as a framework since it does not completely
	specify the details of
how $\Gamma_{j+1}$ in either \eqref{def:Gamma} or \eqref{eq:serious} is updated.
 Second, it is shown in
 Lemma~\ref{lemma:optimality} that $\hat w_k \in \partial_{\hat \varepsilon_k} \left[\phi_m(\cdot;\hat y_k) \right]  (\hat y_k)$ for every $k \ge 1$. Hence, $\hat y_k$ is 
 $(\bar \eta,\bar \varepsilon;m)$-regularized stationary point of $\phi$ whenever
 the stopping criterion
 $\|\hat w_k\| \le \bar \eta$ and
 $\hat \varepsilon_k \le \bar \varepsilon$ is satisfied
 in step~3b.
 Third, in view of the definition of $ y_j $ in \eqref{def:txj}
	and the above relation, it then follows that
	\begin{equation}\label{eq:minseq}
	 	y_j \in \Argmin \left\lbrace \phi_m(x;\hat y_{k-1}) +\frac{1}{2\lam}\|x - \hat y_{k-1}\|^2:
		x \in \{ \hat y_{k-1},x_{i_k},\ldots,x_j\}	\right\rbrace. 
	\end{equation}
Fourth, two simple ways of choosing the bundle function $\Gamma_{j+1}$ such that \eqref{eq:serious} holds
 are to set
$\Gamma_{j+1}=\ell_{f} (\cdot;\hat y_{k}) + h$ or $\Gamma_{j+1}=\max\{\tilde \Gamma_{j+1}, \ell_{f}(\cdot;\hat y_{k}) +h\}$, where
 \[
 \tilde \Gamma_{j+1} := \Gamma_j - m \inner{\hat y_k -\hat y_{k-1}}{\cdot-\hat y_{k}} - \frac{m}2 \|\hat y_{k}-\hat y_{k-1}\|^2.
 \]
 Clearly, the first choice for $\Gamma_{j+1}$
 satisfies \eqref{eq:serious}.
 Moreover, using the fact that $\Gamma_j \le \phi_m(\cdot;\hat y_{k-1})$ and the definition of
 $\phi_m(\cdot;\hat y_{k-1})$ in \eqref{def:phim}, it is easy to see that
 $\tilde \Gamma_{j+1} \le \phi_m(\cdot;\hat y_{k})$, and hence that
 the second choice
 for $\Gamma_{j+1}$
 also satisfies
 \eqref{eq:serious}. Fifth, 
PBF still requires knowledge of a parameter $m$ as in Assumption (A1),
and hence is not a completely universal method for finding a stationary point of \eqref{eq:opt_problem}.
Finally,  when $f$ is convex and $h\equiv 0$, the bundle update in \eqref{def:Gamma} is a special case of the one proposed in relations (4.7)-(4.9) of \cite{correa1993convergence} (see also (2.2)-(2.4) of \cite{diaz2023optimal}). 




\

{\bf PPM Interpretation:} 
We now discuss how PBF can be interpreted as an inexact proximal
point method for solving \eqref{eq:opt_problem}.
First,
the iterations within the $k$-th cycle can
be interpreted as cutting plane
iterations applied to
the prox subproblem
\begin{equation}
\hat M^{\lam}(\hat y_{k-1}) = \min \left \{ 
\phi(x) + \frac{1}{2 } 
\left( \frac1\lam + m \right)
\|x - \hat y_{k-1}\|^2 : x \in \R^n \right \}.
\label{sub_pro}
\end{equation}

Second, the pair $(y_j,\Gamma_j)$ found in the serious (i.e., the last) iteration $j$ of the $k$-th cycle  approximately solves
\eqref{sub_pro}
according to \eqref{ineq:hpe1}, in which case
the point
$x_j$ as in step 1 of PBF becomes the center
$\hat y_k$ for the next cycle.

We now discuss how the inexact criterion \eqref{ineq:hpe1}
can be interpreted in
terms of the prox subproblem \eqref{sub_pro}.
Since $\Gamma_{j} \le \phi_m(\cdot;\hat y_{k-1})$, it follows from the definitions of $\theta_{j}$, $\hat M^{\lam}(\hat y_{k-1})$, and $\phi_m(\cdot;\hat y_{k-1})$ in step 1 of PBF ,  \eqref{sub_pro}, and \eqref{def:phim}, respectively,
that $\theta_{j} \le \hat M^{\lam}(\hat y_{k-1})$, and hence that
 \begin{align}\label{ineq:tilde-tj}
     0 &\le 
     \phi(y_j) 
     +\frac12 \left(\frac1{\lam}+m\right)\|y_{j}-\hat y_{k-1}\|^2 - \hat M^{\lam}(\hat y_{k-1}) \nn \\
     &\le 
     \phi(y_j) 
     +\frac12\left(\frac1{\lam}+m\right)\|y_{j}-\hat y_{k-1}\|^2 - \theta_{j}
  = t_{j} ,
 \end{align}
 where the last identity follows
 from the definitions of
 $t_j$ in \eqref{ineq:hpe1}.
Hence, if $j$ is an iteration 
for which 
\eqref{ineq:hpe1} does not hold  (i.e., a serious one), then it follows from \eqref{ineq:tilde-tj} that
$y_j$ is a $\delta$-solution of \eqref{sub_pro}. 

We now state the main complexity result for PBF. Its complexity bound
depends on the quantity
\begin{equation}\label{def:bar tau}
 \bar \sigma :=   \frac{\left[4\lam(L+m)+1\right]\left( M^2 + \beta_2 \left\{\beta_1[\hat M^{\lam}(\hat x_0) - \phi^* + 2\delta] + 4\zeta \lam M^2 \right\} \right)}{8\lam M^2}
\end{equation}
where
\begin{align}\label{beta}
  \beta_1 := 
  \left(m + \frac{2}{\zeta \lam}\right)\left(m+\frac{1}{\lam} \right)^{-1}
  , \quad
  \beta_2 := \left( \frac{ L+m}{2}+1\right) 
  \zeta^{-2}\left(\frac{1}{4\zeta \lam}+\frac{m}{2} \right)^{-1},
\end{align}
and
\begin{equation}\label{label:def_zeta}
      \zeta := \left\{ \begin{array}{ll}
         \frac{1}{2(L+m)\lam} \quad & \mbox{if $\lam > \frac{1}{2(L+m)}$};\\
        1 & \mbox{if $\lam \le \frac{1}{2(L+m)}$}.\end{array} \right. 
        \end{equation}



\begin{theorem}[Main Theorem] \label{thm:main1}
PBF 
generates 
a $(\bar \eta,\bar\varepsilon;m)$-regularized stationary point
 of~$\phi$
within the iteration sequence $\{\hat y_k\}$ (see step 3b of PBF)
in	
\begin{equation}\label{cmplx:total}
		 \left [4+ 4\lam (L+m) + \frac{16\lam M^2}{\delta} + [8\lam (L+m)+3] \log^+ \bar \sigma\right] \left[ 2\left(\frac{1}{\bar \varepsilon} + \frac{8(m \lam +1)}{\lam \bar \eta^2} \right)(\hat M^{\lam}(\hat x_0) - \phi^*) +2 \right]
\end{equation}
total iterations where $(\bar \eta,\bar \varepsilon)$ is as in step 0 of PBF and $\delta$ is as in \eqref{eq:def-tildem}.
\end{theorem}

In terms of the tolerances
$\bar \eta$ and $\bar \varepsilon$ only, it follows
from Theorem \ref{thm:main1} that the
iteration complexity of
PBF to find a $(\bar \eta,\bar\varepsilon;m)$-regularized stationary point of $\phi$ is
 ${\cal O}(\bar \eta^{-2}\max \{ \bar \eta^{-2},\bar \varepsilon^{-1}\} )$.

For the sake of stating an iteration-complexity for
PBF to find a $(\rho;1/m)$-Moreau stationary point,
we consider the specific
case of PBF and Theorem \ref{thm:main1}
with 
parameters  $\lam$
chosen as $\lam = \Theta(m^{-1})$ and $L =0$.
In this case, it follows from \eqref{eq:def-tildem} that
\begin{equation}\label{eq:simdelta}
\delta = {\cal O}\left( \min \left \{ \frac{\bar \eta^2}{m},  \bar \varepsilon \right \}  \right).
\end{equation}
Thus, the complexity bound
\eqref{cmplx:total} becomes
\begin{equation}\label{cmplx:total'}
   {\cal O}  \left(  \frac{M^2[\hat M^{\lam}(\hat x_0) - \phi^*]}{\delta \bar \eta^2 }       \right)
\end{equation}
which, in view of \eqref{eq:simdelta}, is
\begin{equation}\label{cmplx:total''}
   {\cal O}\left( \frac{\hat M^{\lam}(\hat x_0) - \phi^*}{\bar \eta^2} \left( M^2\max\left\{\frac{m }{\bar \eta^2 },\frac{ 1}{\bar \varepsilon}\right\} \right) \right).
\end{equation}

We are now ready to
state the iteration complexity
for PBF to find an approximate Moreau stationary point. 
\begin{corollary}
For any given $\rho>0$, PBF with  $\lam = 1/m$ and 
$(\bar \eta,\bar \varepsilon)$ given by
\[
\bar \eta = \frac{\rho}8, \quad \bar \varepsilon=  \frac{\rho^2}{2592m}, 
\]
applied to an instance of \eqref{eq:opt_problem}
satisfying \eqref{cond:A2} with $L=0$
generates 
a  $(\rho;1/m)$-Moreau stationary point
 of~$\phi$
within the iteration sequence $\{\hat y_k\}$
in	
\begin{equation}\label{new_comp}
    {\cal O} \left(  \frac{M^2 m [\hat M^{\lam}(\hat x_0) - \phi^*]}{\rho^4}  \right)  
\end{equation}
 total iterations.
\end{corollary}

\begin{proof}
Note that the choice of $(\bar \eta, \bar \varepsilon)$ shows that PBF generates a $(\rho;1/m)$-Moreau stationary point of $\phi$ in view of Proposition \ref{key:rela}(b). The choice of $(\bar \eta, \bar \varepsilon, 1)  $ and relation \eqref{eq:simdelta} imply that 
\[
\delta = \Theta\left(\frac{\rho^2}{m}\right).
\]
The conclusion of the
corollary now follows
from
the above relation and \eqref{cmplx:total''}.
\end{proof}

It is worth noting that the iteration-complexity of 
the PS method of Subsection \ref{SEC:SCS}
is not better than 
the one for
PBF with  $\lam =1/m$ and $L=0$, namely,
bound
\eqref{new_comp},
and the first
bound on the right hand side of \eqref{eq:MoreauT} equals the latter one only when
\[
\gamma= {\Theta}\left(\frac1M\sqrt{\frac{\hat M^{\lam}(\hat x_0)-\phi^*}m} \right).
\]
However, this choice of $\gamma$ is generally not
computable
because the quantity
 $\hat M^{\lam}(\hat x_0) - \phi^*$
and
the parameter $M$ 
may not be known. 

\subsection{Special instances of PBF}\label{sec:BUF}

Noting that the shadow $\bar \Gamma_j$ in step 3 of PBF is undetermined, PBF has the flexibility to choose $\bar \Gamma_j$. Next, we present two specific schemes of constructing $\bar \Gamma_j$, and hence two concrete ways to implement step 3a of PBF. 


	\begin{itemize}
	   
	    \item \textbf{2-cut:} This scheme sets $\bar \Gamma_j = A_{j}+h(\cdot)$ where   $A_0(\cdot) = \ell_{f_m(\cdot;\hat y_0)}(\cdot;x_0)$ and  $A_j$ is recursively updated as follows
        \begin{equation}\label{def:Af+}
         A_{j+1}(\cdot) =  \theta A_j(\cdot) + (1-\theta) \ell_{f_m(\cdot;\hat y_{k-1})}(\cdot;x_{j-1})
     \end{equation}  for some $\theta \in [0,1]$.  In fact, \eqref{def:subprob} with $ \Gamma_j(\cdot)=\max \{A_j(\cdot),\ell_{f_m(\cdot;\hat y_{k-1})}(\cdot;x_{j-1})\}+h(\cdot)$ and $x^c = \hat y_{k-1}$  is equivalent to
        \begin{equation}\label{eq:xj-alt}
            x_j = \underset{u\in \R^n, t\in \R}\argmin \left \{t +h(u)+\frac{1}{2\lam}\|u-\hat y_{k-1}\|^2: \, A_j(u) \le t, \,\, \ell_{f_m(\cdot;\hat y_{k-1})}(u;x_{j-1}) \le t\right\},
        \end{equation}
         we denote by  $\theta$ an optimal Lagrange multiplier associated with the constraint  $A_j(u) \le t$. Moreover, it is easy to check from the optimality conditions of \eqref{eq:xj-alt} that $\bar \Gamma_j$ is a shadow of $\Gamma_j$ for \eqref{def:subprob} with $x^c = \hat y_{k-1}$.

	    \item  \textbf{multi-cut:}    This scheme sets $\bar \Gamma_j(\cdot) = \Gamma(\cdot;C(x_j))$ where 
     \begin{equation}\label{eq:Gamma-E3}
	        \Gamma(\cdot;C(x_j)) := \max \{  \ell_{f_m(\cdot;\hat y_{k-1})}(\cdot;c) : c \in C(x_j) \}+h(\cdot),
    \end{equation}   
        and   \begin{equation}\label{def:C+}
            C(x_j) := \{c \in C_j: \ell_{f_m(\cdot;\hat y_{k-1})}(x_j;c)+h(x_j) = \Gamma_j(x_j) \}.
        \end{equation}

         It can be shown that $\bar \Gamma_j$ is a shadow of $\Gamma_j$ for \eqref{def:subprob} with $x^c = \hat y_{k-1}$ following a similar argument as in the proof of Proposition D.2 of \cite{liang2024unified}. 
	\end{itemize}

\section{Proof of Theorem \ref{thm:main1}}
\label{sec:proof}

This section contains three subsections.
The first one derives a preliminary bound on the length of each cycle in terms of the tolerance $\delta$.
The second one bounds the number of
cycles generated by PBF with a specific choice
of $\delta$ until it obtains
a $(\bar \eta,\bar \varepsilon;m)$-regularized stationary point of $\phi$.
Finally, the third one derives the total iteration complexity of PBF with the aforementioned choice of $\delta$
until it obtains
a $(\bar \eta,\bar \varepsilon;m)$-regularized stationary point of $\phi$.

\subsection{Bounding the cardinalities of the cycles}\label{subsec:length}
This section
establishes a preliminary  upper bound on the cardinality of
each cycle ${\cal C}_k$
defined in \eqref{def:Ck}, and hence on
the number of null iterations
between two consecutive serious ones.

Throughout this section
and the next one, we let
\beq \label{eq:hatfk}
\hat f_{k}(\cdot) = f_m(\cdot;\hat y_{k-1}).
\eeq

	 The first result below presents a few basic properties of the null iterations between two consecutive serious ones.

	\begin{lemma}\label{lem:101}
	    For every $ j \in {\cal C}_k\setminus\{i_k\}$, the following statements hold:
	 \begin{itemize}
	        \item[a)] 
         there exists $\bar \Gamma_{j-1} \in \bConv{n} $ such that
	    \begin{align}
	            &\max \{ \bar \Gamma_{j-1} \,,\, \ell_{\hat f_{k}}(\cdot;x_{j-1})+h\} \le \Gamma_{j}, \label{eq:Gamma_j} \\
	            &\bar \Gamma_{j-1}(x_{j-1}) = \Gamma_{j-1}(x_{j-1}), \quad 
	        x_{j-1} = \underset{u\in \R^n}\argmin \left \{\bar \Gamma_{j-1} (u)  + \frac{1}{2\lambda} \|u-\hat y_{{k-1}}\|^2 \right\}; \label{eq:relation}
	    \end{align} 
	        \item[b)] for every
	    $u\in \dom h$, there holds
         \begin{equation}\label{eq:strcon}      
    \bar \Gamma_{j-1}(u)  + \frac{1}{2\lambda} \|u-\hat y_{k-1}\|^2 \ge \theta_{j-1}+ \frac{1}{2\lambda}\|u-x_{j-1}\|^2;
    \end{equation}
	    \end{itemize}
	\end{lemma}

	\begin{proof}
	    a) This statement immediately follows from \eqref{def:subprob} with $x^c = \hat y_{k-1}$, step 3a of PBF, and Definition~\ref{def:shadow}.

	    b) Using the second identity in \eqref{eq:relation}
     and the fact that
     $\bar \Gamma_{j-1}\in \bConv{n}$, we have for every $u\in\dom h$,
	    \[
	    \bar \Gamma_{j-1}(u)  + \frac{1}{2\lambda} \|u-\hat y_{k-1}\|^2  \ge \bar \Gamma_{j-1}(x_{j-1}) + \frac{1}{2\lambda} \|x_{j-1}- \hat y_{k-1}\|^2  + \frac{1}{2\lambda} \|u-x_{j-1}\|^2 .
	    \]
	    The statement now follows from the above inequality, the first identity in \eqref{eq:relation}, and the definition of $\theta_{j-1}$ in \eqref{ineq:hpe1}.
	\end{proof}
	
The next lemma presents some basic facts about $\{\theta_j\}$ and $\{t_j\}$.

    \begin{lemma}\label{lem:theta_t}
        For every $ j \in {\cal C}_k\setminus\{i_k\}$, the following inequalities hold: 
   \begin{align}      
 \theta_{j} &\ge \theta_{j-1}+\frac{1}{2 \lam}\|x_{j}-x_{j-1}\|^2,\label{ineq:theta_j}\\
			t_{j} &+ \frac1{2\lam} \|x_{j}-x_{j-1}\|^2 \le t_{j-1}\label{ineq:t_j}.
   \end{align}
    \end{lemma}

    \begin{proof}
         Using the definition of $\theta_j$ in step 1 of PBF , \eqref{eq:Gamma_j}, and \eqref{eq:strcon} with $u=x_j$, we have 
    \begin{align*}
		\theta_{j}&=\Gamma_{j}(x_{j})+\frac{1}{2\lam}\|x_{j}-\hat y_{k-1}\|^2 \nn \\
		&\overset{\eqref{eq:Gamma_j}}\ge \bar \Gamma_{j-1}(x_{j})+\frac{1}{2\lam}\|x_{j}- \hat y_{k-1}\|^2
		\overset{\eqref{eq:strcon}}\ge \theta_{j-1}+\frac{1}{2 \lam}\|x_{j}-x_{j-1}\|^2 
	\end{align*}
    and thus \eqref{ineq:theta_j} holds.
 Using the definition of $ t_j $  in \eqref{ineq:hpe1}, \eqref{eq:minseq}, and \eqref{ineq:theta_j}, we have
		\begin{align*}
	 t_{j} & \stackrel{\eqref{ineq:hpe1}}=\phi_{ m}(y_{j};\hat y_{k-1}) + \frac{
  1}{2\lam} \|y_j - \hat y_{k-1}\|^2 - \theta_{j} \stackrel{\eqref{eq:minseq}}\le \phi_{ m}(y_{j-1};\hat y_{k-1})  + \frac{1}{2\lam} \|y_{j-1} - \hat y_{k-1}\|^2  - \theta_{j}  \\
            & \stackrel{\eqref{ineq:theta_j}}\le \phi_{m}(y_{j-1};\hat y_{k-1}) + \frac{1}{2\lam} \|y_{j-1} - \hat y_{k-1}\|^2  - \theta_{j-1} - \frac{1}{2\lam} \|x_{j} - x_{j-1}\|^2 \stackrel{\eqref{ineq:hpe1}} = t_{j-1} - \frac{1}{2\lam} \|x_{j} - x_{j-1}\|^2
		\end{align*}
        and thus \eqref{ineq:t_j} holds.
    \end{proof}

    The following technical result provides an important recursive formula  for $\{t_j\}$.




 \begin{lemma}\label{lem:basic2}
 For every
  $ j \in {\cal C}_k\setminus\{i_k\}$, we have:
			
			\begin{equation}	    
			t_{j}\left(1 +  \frac{t_{j}}{  4\lam(2M^2 +   (L+m) t_{j})  }\right)  \le t_{j-1}.
        \end{equation}
	\end{lemma}
	\begin{proof}		
  Using the definition of $t_j$ in \eqref{ineq:hpe1}, \eqref{eq:minseq}, and the definition of $\theta_j$ in step 1 of PBF , we conclude that 
		\begin{align*}
		t_j
  & \stackrel{\eqref{ineq:hpe1}}= 
  \phi_{ m}(y_j;\hat y_{k-1}) + \frac{1}{2\lam} \|y_j - \hat y_{k-1}\|^2  - \theta_j 
  \stackrel{  \eqref{eq:minseq}} \le
\phi_{ m}(x_j;\hat y_{k-1})   + \frac{1}{2\lam} \|x_j - \hat y_{k-1}\|^2  - \theta_j \\
&=
  \phi_{m} (x_j;\hat y_{k-1})    - \Gamma_j(x_j) . 
  \end{align*}
  The above inequality,
  relations \eqref{eq:Gamma_j} and \eqref{eq:fm-linear}, and
  the fact that $\phi_{ m}(x_j;\hat y_{k-1}) = f_{ m}(x_j;\hat y_{k-1})+h(x_j)$, imply that
  \begin{align*}
0  &  \le \phi_m(x_j;\hat y_{k-1}) - (\ell_{\hat f_k}(x_j;x_{j-1})+h(x_j))-t_j\\
&\stackrel{\eqref{eq:Gamma_j}} = f_m(x_j;\hat y_{k-1}) - \ell_{\hat f_k}(x_j;x_{j-1})-t_j\\
		& \stackrel{\eqref{eq:fm-linear}}\le \frac{L+m}{2} \|x_j-x_{j-1}\|^2 + 2M \|x_j-x_{j-1}\|  -t_j,
		\end{align*}
	which, together with $t_j \ge 0$ due to \eqref{ineq:tilde-tj},  can be easily seen to imply that 
		\[
		\|x_j-x_{j-1}\| \ge \frac{-2M + \sqrt{4M^2 + 2 (L+m) t_j}}{L+m}= \frac{2 t_j}{ 2M + \sqrt{4M^2 + 2 (L+m) t_j}} \ge \frac{t_j}{\sqrt{4M^2 + 2 (L+m) t_j}}.
		\]
		The statement now follows from \eqref{ineq:t_j} and the above inequality.
	\end{proof}

	\begin{proposition}\label{prop:innercomp}
		For every cycle index $k \ge 1$ generated by PBF,
its size $|{\cal C}_k|$ is bounded by
		\begin{equation}\label{inner}
		N_k:=4+ 4\lam (L+m) + \frac{16\lam M^2}{\delta} + [8\lam (L+m)+3] \log^+ \sigma_{k}
		\end{equation}
  where
  \begin{equation}\label{def:tau} 
  \sigma_{k} := \frac{ t_{i_k}}{\delta +8\lam M^2/[4\lam (L+m)+1]}.
  \end{equation}
	\end{proposition}
	
	\begin{proof}	
        Suppose for contradiction that $|{\cal C}_k|> N_k$ and define $J =|{\cal C}_k|-2 $.
Since the $k$th cycle does not stop at iteration $j+i_k$ for any $j=0,\ldots,J$,
we have that
 \begin{equation}\label{vio2}
 q_j := t_{j+i_k}  > \delta  \quad \forall j = 0,\ldots,  J.
 \end{equation}
Observe that  Lemma \ref{lem:basic2} and \eqref{vio2} imply that assumptions \eqref{ineq:recursive1} and \eqref{ineq:recursive2} of Lemma \ref{lem:keyrecur} is satisfied with $a = 8\lam M^2$, $b = 4\lam (L+m)$, and $\{q_j\}_{j=0}^{J}$  . Hence, the conclusion of Lemma \ref{lem:keyrecur} and the definition of $N_k$ in \eqref{inner} imply that  $J\le N_k-2$ which contradicts with the assumption $|{\cal C}_k|=J+2 > N_k$.
	\end{proof}

\subsection{Bounding total number of cycles}\label{subsec:cycle}


The goal of this subsection is to
establish a bound on
 the total number of cycles generated by PBF.


The following technical result
provides the main properties of
the sequences $\{\hat x_k\}$, $\{\hat y_k\}$ and $\{\hat \Gamma_k\}$
generated by PBF.
Recall that
$\hat x_k$, $\hat y_k$, and $\hat \Gamma_k$ denote the
last $x_j$, $y_j$, and $\Gamma_j$, respectively,
generated within 
the cycle ${\cal C}_k$, i.e., the ones with $j=j_k$.

\begin{lemma}\label{lem:basic_prop}
The  following statements hold
for every $k \ge 1$:
\begin{itemize}
    \item[a)] $\hat x_{k}$ is
 the optimal solution 
    of
    \begin{equation}
        \label{eq:optmality}
        \min_{u \in \R^n} \left\{ \hat \Gamma_{k}(u)+\frac{1}{2\lambda} \| u - \hat y_{k-1}  \|^2 \right\};
        \end{equation}
        hence, if $\hat \theta_k$ denotes the optimal value of \eqref{eq:optmality}, then
        \begin{equation}\label{eq:hattheta}
\hat \theta_k = \hat \Gamma_k(\hat x_k) +\frac{1}{2\lambda} \| \hat x_k - \hat y_{k-1}  \|^2;
        \end{equation}
\item[b)] there hold
\begin{equation}
\hat \Gamma_{k}(\cdot) \in \bConv{n}, \quad \hat \Gamma_{k}(\cdot) \le \phi_m (\cdot;\hat y_{k-1})
\label{eq:basic}
\end{equation} and
        \begin{equation}
        \label{eq:control}
     \phi_m (\hat y_k;\hat y_{k-1})+\frac{1}{2\lam} \|\hat y_k - \hat y_{k-1}\|^2
        - \hat \theta_{k} \le \hat \delta_k.
        \end{equation}
\end{itemize}
\end{lemma}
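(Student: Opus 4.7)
The plan is to prove this lemma by straightforward unwinding of definitions, since $\hat{x}_k$, $\hat{y}_k$, $\hat{\Gamma}_k$ are simply the values of $x_j$, $y_j$, $\Gamma_j$ at the last iteration $j=j_k$ of the $k$-th cycle (by step 2b of PBF).

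For part (a), I would first recall that by the definitions in step 2b of PBF, we have $\hat{x}_k = x_{j_k}$ and $\hat{\Gamma}_k = \Gamma_{j_k}$. Then the defining relation \eqref{def:xj} applied at $j=j_k$ immediately gives that $\hat{x}_k$ solves \eqref{eq:optmality}. The identity \eqref{eq:hattheta} for $\hat{\theta}_k$ then follows by identifying it with $\theta_{j_k}$ via \eqref{def:thetaj}.

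For part (b), the two inclusions and inequalities in \eqref{eq:basic} follow from \eqref{phi-property} specialized to $j=j_k \in \mathcal{C}_k$, combined with $\hat{\Gamma}_k = \Gamma_{j_k}$. For \eqref{eq:control}, the key observation is that since $j_k$ is by definition a serious iteration, the test \eqref{ineq:hpe1} must fail at $j=j_k$, i.e., $t_{j_k} \le \delta$. Expanding via the definition of $t_{j_k}$ in \eqref{ineq:hpe1}, together with the identifications $\hat{y}_k = y_{j_k}$ and $\hat{\theta}_k = \theta_{j_k}$ from (a), yields $\phi_{\tm}(\hat{y}_k; \hat{x}_{k-1}) - \hat{\theta}_k = t_{j_k} \le \delta$, which is exactly \eqref{eq:control}.

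There is no substantial obstacle here: the lemma is essentially bookkeeping, translating the "hatted" quantities (which represent the last item produced in each cycle) back to the original indexed sequences $\{x_j\}, \{y_j\}, \{\Gamma_j\}, \{\theta_j\}$ and then invoking the defining properties of PBF (specifically \eqref{def:xj}, \eqref{def:thetaj}, \eqref{def:txj}, the serious/null dichotomy in step 2, and the bundle monotonicity \eqref{phi-property}). The only point requiring slight care is that \eqref{phi-property} covers all $j \in \mathcal{C}_k$, including $j=j_k$, so the bound $\hat{\Gamma}_k \le \phi_m(\cdot; \hat{x}_{k-1})$ is a bound with respect to the \emph{previous} prox-center $\hat{x}_{k-1}$ (not $\hat{x}_k$), consistent with the appearance of $\hat{x}_{k-1}$ in the prox-term of \eqref{eq:optmality}.
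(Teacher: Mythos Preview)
Your proposal is correct and follows essentially the same approach as the paper's proof. The only cosmetic difference is that for \eqref{eq:basic} you invoke the already-stated relation \eqref{phi-property} directly, whereas the paper's proof re-derives it by splitting into the cases $|{\cal C}_k|=1$ (where \eqref{eq:serious} applies) and $|{\cal C}_k|>1$ (where \eqref{def:Gamma} applies); since \eqref{phi-property} was justified in the remarks following PBF precisely by that case split, the two arguments are equivalent.
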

\begin{proof}
a) The definition of $ \hat x_k $ in step 3b of PBF  and step 1 of PBF  imply that $\hat x_{k}$ is an  optimal solution
    of    \eqref{eq:optmality}.

b) 
 If $|C_k| = 1$, i.e., $j_k$ is also the first iteration of cycle $C_k$, then \eqref{eq:basic} follows from
 the fact that $\hat \Gamma_k(\cdot) = \Gamma_{i_k}(\cdot) \in {\cal B}(\phi_m(\cdot;\hat y_k))$. If $|C_k| > 1$, then 
$\hat \Gamma_k = \Gamma_{j_k}$ and hence satisfies \eqref{eq:basic} in view of step 3a of PBF.
Moreover, \eqref{eq:control} follows from the logic of the prox-center update rule in step 3 of PBF (see \eqref{ineq:hpe1}).
\end{proof}

The following result presents an important inclusion involving $(\hat y_k, \hat w_k, \hat \varepsilon_k)$
which implies that $\hat y_k$ is 
a $(\|\hat w_k\|, \hat \varepsilon_k;m)$-regularized stationary point of $\phi$.

\begin{lemma}
\label{lemma:optimality}
For every $k \ge 1 $, the quantities
$\hat x_k$, $\hat y_k$, $\hat v_k$, $\hat w_k$ and $\hat \varepsilon_k$ as in step 3b of PBF
satisfy
        \begin{equation}
           \hat \varepsilon_{k} \ge 0,  \qquad
           \hat v_{k} \in \partial_{\hat \varepsilon_{k}} [\phi_m (\cdot;\hat y_{k-1})] (\hat y_{k}),
           \label{eq:key_inclusion}
        \end{equation}
        \begin{equation}
              \hat w_{k} \in  \partial_{\hat \varepsilon_{k}} [\phi_m(\cdot;\hat y_{k}) ](\hat y_{k}).
              \label{eq:goal}
        \end{equation}
\end{lemma}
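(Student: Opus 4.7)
The plan is to chain four ingredients: (i) the first-order optimality of $\hat x_k$ furnished by Lemma \ref{lem:basic_prop}(a), (ii) convexity of $\hat \Gamma_k$, (iii) the minorization $\hat \Gamma_k \le \phi_m(\cdot;\hat x_{k-1})$ of Lemma \ref{lem:basic_prop}(b), and (iv) the center-translation identity for $\phi_m$. Steps (i)--(iii) will produce \eqref{eq:key_inclusion} together with $\hat \varepsilon_k \ge 0$, and step (iv) will convert that inclusion into \eqref{eq:goal}.

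First, Lemma \ref{lem:basic_prop}(a) says $\hat x_k$ solves \eqref{eq:optmality}, so its first-order optimality condition yields $\hat v_k = (\hat x_{k-1}-\hat x_k)/\lambda \in \partial \hat \Gamma_k(\hat x_k)$ by the very definition \eqref{def:vk}. By convexity of $\hat \Gamma_k \in \bConv{n}$, for every $u \in \R^n$,
\[
\hat \Gamma_k(u) \ge \hat \Gamma_k(\hat x_k) + \inner{\hat v_k}{\hat y_k - \hat x_k} + \inner{\hat v_k}{u - \hat y_k}.
\]
Combining this with $\phi_m(u;\hat x_{k-1}) \ge \hat \Gamma_k(u)$ from Lemma \ref{lem:basic_prop}(b), and then using the definition of $\hat \varepsilon_k$ in \eqref{def:wk}, I would rearrange to
\[
\phi_m(u;\hat x_{k-1}) \ge \phi_m(\hat y_k;\hat x_{k-1}) + \inner{\hat v_k}{u - \hat y_k} - \hat \varepsilon_k \qquad \forall u \in \R^n,
\]
which is precisely the inclusion in \eqref{eq:key_inclusion}. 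Setting $u = \hat y_k$ in this same chain then forces $\hat \varepsilon_k \ge 0$.

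For \eqref{eq:goal}, the key observation is that, as a function of $u$, the difference $\phi_m(u;\hat y_k) - \phi_m(u;\hat x_{k-1}) = m\inner{u}{\hat x_{k-1}-\hat y_k} + \mathrm{const}$ is affine, which is immediate from \eqref{def:phim}. Adding this affine piece to both sides of the $\hat \varepsilon_k$-subgradient inequality established above shifts the center from $\hat x_{k-1}$ to $\hat y_k$, replaces $\hat v_k$ with $\hat v_k - m(\hat y_k - \hat x_{k-1}) = \hat w_k$ (see \eqref{def:wk}), and leaves $\hat \varepsilon_k$ unchanged, giving \eqref{eq:goal}. Alternatively, one may cite Lemma \ref{lem:chara_weakly} directly to perform this center change.

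The only point requiring care is the last step: verifying that translating the center of $\phi_m$ from $\hat x_{k-1}$ to $\hat y_k$ preserves the error $\hat \varepsilon_k$ exactly, rather than inflating it. This works precisely because the adjustment is affine in $u$, so the subgradient inequality is preserved verbatim once the subgradient is corrected by $-m(\hat y_k - \hat x_{k-1})$. Everything else amounts to routine manipulations of convex subgradient inequalities.
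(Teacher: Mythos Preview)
Your proposal is correct and follows essentially the same route as the paper's own proof: derive $\hat v_k \in \partial \hat \Gamma_k(\hat x_k)$ from the optimality of $\hat x_k$, chain the subgradient inequality with the minorization $\hat \Gamma_k \le \phi_m(\cdot;\hat x_{k-1})$ and the definition of $\hat \varepsilon_k$ to get \eqref{eq:key_inclusion}, read off $\hat \varepsilon_k \ge 0$ at $u=\hat y_k$, and then shift the center via Lemma~\ref{lem:chara_weakly} (equivalently, your affine-difference observation) to obtain \eqref{eq:goal}.
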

\begin{proof}
Since $\hat x_k$ is an optimal solution of \eqref{eq:optmality} in view of  Lemma \ref{lem:basic_prop}(a),
using
 the optimality condition 
 for \eqref{eq:optmality}, the fact that
 $\hat \Gamma_k \in \bConv{n}$, and the definition of $\hat v_k$ in \eqref{def:vk}, we have
$\hat v_{k} \in \partial \hat \Gamma_{k}(\hat x_{k})$.
This conclusion, \eqref{eq:basic}, the definition of subdifferential in \eqref{def:subdif}, 
and the definition of $\hat \varepsilon_{k}$ in \eqref{def:wk},
then imply that for every $u\in \dom h$,
\[
  \phi_m (u;\hat y_{k-1}) \ge  \hat \Gamma_{k}(u) \ge \hat \Gamma_{k}(\hat x_{k}) + \inner{\hat v_{k}}{u-\hat x_{k}} 
=\phi_m (\hat y_k;\hat y_{k-1}) + \inner{\hat v_{k}}{u-\hat y_{k}} - \hat \varepsilon_{k}
\]
and hence that the inclusion in \eqref{eq:key_inclusion} holds.
The inequality in \eqref{eq:key_inclusion} follows from the above
inequality with $u=\hat y_{k}$.
Moreover,
the definition of $\hat w_{k}$ (see step 3b of PBF),
the inclusion in \eqref{eq:key_inclusion},
  and Lemma \ref{lem:chara_weakly} imply  that
\[
  \hat w_{k} \in \partial_{\hat \varepsilon_{k}} [\phi_m(\cdot;\hat y_{k-1}) ] (\hat y_{k})  - m(\hat y_{k} - \hat y_{k-1}) = \partial_{\hat \varepsilon_{k}} [\phi_m(\cdot;\hat y_{k}) ](\hat y_{k}),
\]
and hence that \eqref{eq:goal} holds.
\end{proof}

It follows from \eqref{eq:goal} that $\hat y_k$ is
a $(\|\hat w_k\|, \hat \varepsilon_k;m)$-regularized stationary point of $\phi$ where
the pair $(\hat w_k,\hat \varepsilon_k)$ can be easily computed according to step 3b of PBF.
Our remaining effort from now on will be to analyze the number of iterations it takes to obtain
an index $k$ such that
$\|\hat w_k\| \le \bar \eta$ and
$ \hat \varepsilon_k \le
\bar \varepsilon$,
and hence  
a $(\bar \eta, \bar \varepsilon;m)$-regularized stationary point
$\hat y_k$ of $\phi$.


The purpose of the following three
results is to establish a recursive
formula (see Lemma \ref{lem:key_recursive} below).
Lemma \ref{lemma:key_est} and Corollary \ref{cor:wk} 
are technical results that are needed
to prove Lemma \ref{lem:key_recursive}.

\begin{lemma} \label{lemma:key_est}
 For every $k \ge 1$, the quantities $\hat x_k$, $\hat y_k$, $\hat w_k$ and $\hat \varepsilon_k$,
 as in step 3b of PBF, satisfy
    \begin{equation}
    \label{eq:key_est}
        \hat \varepsilon_{k} +
       \frac{1}{2\lambda}\|\hat y_{k} - \hat x_{k}\|^2  \le 2 \delta + \frac{1 + m\lam}{2\lam}\|
       \hat y_k - \hat y_{k-1}\|^2
    \end{equation}
    and
    \begin{equation}
    \|\hat w_{k} \|^2 
 \le \frac{8\delta}{\lam} +  4 \left( m+\frac{1}{\lambda} \right)^2\|\hat y_{k} - \hat y_{k-1}\|^2  
 \label{eq:est_w}
    \end{equation}
   where $\lam$ and $\delta$ are as in 
    step 0 of PBF.
\end{lemma}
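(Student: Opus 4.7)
The plan is to prove the two bounds in sequence, using (1) to derive (2). Both rely on the cycle-termination bound \eqref{eq:control} from Lemma~\ref{lem:basic_prop}(b), together with the definitions of $\hat v_k$, $\hat w_k$, $\hat \varepsilon_k$, $\hat \theta_k$ and $\tm$.

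For the first bound, I would start from the definition of $\hat \varepsilon_k$ in \eqref{def:wk} and combine it with $\frac{1}{2\lam}\|\hat y_k - \hat x_k\|^2$. The key algebraic manipulation is to rewrite the cross term $-\inner{\hat v_k}{\hat y_k - \hat x_k} + \frac{1}{2\lam}\|\hat y_k - \hat x_k\|^2$ using the identity $\hat v_k = \lam^{-1}(\hat x_{k-1}-\hat x_k)$ and completing a square, giving
\[
-\inner{\hat v_k}{\hat y_k - \hat x_k} + \frac{1}{2\lam}\|\hat y_k - \hat x_k\|^2 = \frac{1}{2\lam}\bigl(\|\hat y_k - \hat x_{k-1}\|^2 - \|\hat x_k - \hat x_{k-1}\|^2\bigr).
\]
Combining this with the definition of $\hat \theta_k$ from \eqref{eq:hattheta} collapses $\hat \Gamma_k(\hat x_k) + \frac{1}{2\lam}\|\hat x_k - \hat x_{k-1}\|^2$ to $\hat \theta_k$. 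Finally, using $\phi_m(\hat y_k;\hat x_{k-1}) = \phi_{\tm}(\hat y_k;\hat x_{k-1}) - \frac{\chi}{2\lam}\|\hat y_k-\hat x_{k-1}\|^2$ (from \eqref{def:phim} and \eqref{eq:def-tildem}) and applying \eqref{eq:control} yields exactly \eqref{eq:key_est}.

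For the second bound, I would expand $\hat w_k = \hat v_k - m(\hat y_k - \hat x_{k-1})$ by inserting $\pm \hat y_k$ in $\hat x_{k-1} - \hat x_k = (\hat x_{k-1}-\hat y_k) + (\hat y_k - \hat x_k)$, which gives
\[
\hat w_k = \left(m+\tfrac{1}{\lam}\right)(\hat x_{k-1}-\hat y_k) + \tfrac{1}{\lam}(\hat y_k - \hat x_k).
\]
Applying $\|a+b\|^2 \le 2\|a\|^2 + 2\|b\|^2$ produces
\[
\|\hat w_k\|^2 \le \tfrac{2(m\lam+1)^2}{\lam^2}\|\hat y_k-\hat x_{k-1}\|^2 + \tfrac{2}{\lam^2}\|\hat y_k - \hat x_k\|^2,
\]
and substituting the consequence of \eqref{eq:key_est} (namely $\|\hat y_k-\hat x_k\|^2 \le 2\lam\delta + (1-\chi)\|\hat y_k - \hat x_{k-1}\|^2$) gives
\[
\|\hat w_k\|^2 \le \tfrac{4\delta}{\lam} + \tfrac{2[(m\lam+1)^2 + (1-\chi)]}{\lam^2}\|\hat y_k-\hat x_{k-1}\|^2.
\]
Recognizing that $\alpha N / (4\lam^2) = 2[(m\lam+1)^2 + (1-\chi)]/\lam^2$ by the definition of $N$ in \eqref{def:N} then yields \eqref{eq:est_w}.

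No single step is especially hard; the only subtlety is the first completing-the-square identity, which is needed to turn the quadratic penalty plus the $\hat v_k$ cross term into an expression that matches $\hat \theta_k$. The rest is bookkeeping plus a clean triangle-inequality split of $\hat w_k$ into a component proportional to $\hat y_k - \hat x_{k-1}$ and one proportional to $\hat y_k - \hat x_k$, followed by applying the already-proved part (1).
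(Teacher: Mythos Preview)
Your proposal is correct and follows essentially the same approach as the paper's proof: the same completing-the-square identity to reduce $\hat\varepsilon_k + \frac{1}{2\lam}\|\hat y_k-\hat x_k\|^2$ to $\phi_{\tm}(\hat y_k;\hat x_{k-1})-\hat\theta_k + \frac{1-\chi}{2\lam}\|\hat y_k-\hat x_{k-1}\|^2$, and the same decomposition $\hat w_k = \frac{1}{\lam}(\hat y_k-\hat x_k) - (m+\tfrac{1}{\lam})(\hat y_k-\hat x_{k-1})$ followed by $\|a+b\|^2\le 2\|a\|^2+2\|b\|^2$ and the first bound. The only cosmetic difference is that the paper manipulates $\hat\varepsilon_k$ alone and lets $-\frac{1}{2\lam}\|\hat y_k-\hat x_k\|^2$ emerge at the end, whereas you carry the full left-hand side from the start.
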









\begin{proof}
Using both statements (a) and (b) of Lemma \ref{lem:basic_prop},
the definitions of $\hat \varepsilon_{k}$ and $\hat v_k$ in \eqref{def:wk} and \eqref{def:vk}, respectively, we
conclude that
\begin{align}
    \hat \varepsilon_{k} &= \phi_m (\hat y_k;\hat y_{k-1})  - \hat \Gamma_{k}(\hat x_{k}) - \inner{\hat v_{k}}{\hat y_{k} - \hat x_{k}} \nonumber  \\
    & \stackrel{\eqref{eq:control}}\le\hat \delta_k  - \frac{1}{2\lam}\|\hat y_k-\hat y_{k-1}\|^2
    + \hat \theta_k - \hat \Gamma_{k}(\hat x_{k}) - \inner{\hat v_{k}}{\hat y_{k} - \hat x_{k}}  \nonumber \\
    &\stackrel{\eqref{eq:hattheta}}= \hat \delta_k - \frac{1}{2\lam}\|\hat y_k-\hat y_{k-1}\|^2
    + \frac{1}{2\lambda}\|\hat x_{k}-\hat y_{k-1}\|^2 - \inner{\hat v_{k}}{\hat y_{k} - \hat x_{k}}  \nonumber \\
    &\stackrel{\eqref{def:vk}}= \hat \delta_k - \frac{1 }{2\lam}\|\hat y_k-\hat y_{k-1}\|^2 
    + \frac{1}{2\lam} \|\hat x_{k}-\hat y_{k-1}\|^2 + \frac{1}{\lam} \inner{\hat x_{k}-\hat y_{k-1}}{\hat y_{k} - \hat x_{k}} \nonumber  \\
     & = \hat \delta_k  
    - \frac{1}{2\lam}  
    \| \hat y_{k} - \hat x_{k}\|^2. \label{est_var}
    \end{align}
The above inequality together with  the definition of $\hat w_k$ and $\hat \delta_k$ in step 3b of PBF implies that
\begin{align*}
    \|\hat w_{k} \|^2 &\stackrel{\eqref{def:wj}}\le \frac{2}{\lam^2}\|\hat y_{k} - \hat x_{k}\|^2 + 2 \left( m+\frac{1}{\lambda} \right)^2\|\hat y_{k} - \hat y_{k-1}\|^2 \\&
 \overset{\eqref{est_var}}\le \frac{4\hat \delta_k}{\lam}  + 2 \left( m+\frac{1}{\lambda} \right)^2\|\hat y_{k} - \hat y_{k-1}\|^2\\
 & \stackrel{\eqref{def:wj}}= \frac{4\delta}{\lam }+\frac{1}{2(m\lam+1)}\|\hat w_k\|^2+ 2 \left( m+\frac{1}{\lambda} \right)^2\|\hat y_{k} - \hat y_{k-1}\|^2
\end{align*}
 where the first inequality is due to the relation $\|a+b\|^2 \le 2 \|a\|^2 + 2 \|b\|^2$. Inequality \eqref{eq:est_w} now follows from the above inequality and simple algebraic manipulation. Using \eqref{eq:est_w}, \eqref{est_var} and the definition of $\hat \delta_k$ in step 3b of PBF, we have
 \begin{align*}
 \hat \varepsilon_{k} +
       \frac{1}{2\lambda}\|\hat y_{k} - \hat x_{k}\|^2 & \stackrel{\eqref{def:wj},\eqref{est_var}}\le  \delta + \frac{\lam}{8(m \lam+1)}\|\hat w_k\|^2\\
       &\stackrel{\eqref{eq:est_w}}\le \delta + \frac{\lam}{8(m\lam+1)}\left(\frac{8\delta}{\lam}+ 4 \left( m+\frac{1}{\lambda} \right)^2\|\hat y_{k} - \hat y_{k-1}\|^2    \right)\\
       &\le 2 \delta + \frac{1 + m\lam}{2\lam}\|
       \hat y_k - \hat y_{k-1}\|^2,
 \end{align*}
 and thus \eqref{eq:key_est} holds.
\end{proof}

\begin{corollary} \label{cor:wk}
    If for some $k \ge 1$, we have $\hat y_k = \hat y_{k-1}$, then
    the following statements hold:
    \begin{itemize}
        \item[a)]
        $y_j = \hat y_{k-1}$ for all $j \in {\cal C}_k$;
        \item[b)]
        $\|\hat w_k\| \le \bar \eta/4$ and $\hat \varepsilon_k \le \bar \varepsilon/8$, and hence $\hat y_k$ is 
a $(\bar \eta/4,\bar\varepsilon/8;m)$-regularized stationary point  of problem \eqref{eq:opt_problem}.
    \end{itemize}
\end{corollary}

\begin{proof}
a) Denote    
    $\Psi(x)=\phi_m(x;\hat y_{k-1}) +\frac{1}{2\lam}\|x - \hat y_{k-1}\|^2$. Assume that for contradiction that 
$j $ is the first index 
in ${\cal C}_k$ such that $y_j \ne \hat y_{k-1}$, and hence that $y_j \ne y_{j-1}=\hat y_{k-1}$. Because of the test \eqref{def:txj} in step 1 of PBF, it follows that
$y_j = x_j$ and
$\Psi(y_{j-1})> \Psi(x_j)$, and hence that  
\[
\Psi(\hat y_{k-1}) =
\Psi(y_{j-1})> \Psi(x_j) = \Psi(y_j) \ge \Psi(\hat y_k) = \Psi(\hat y_{k-1}),
\]
which yields the desired contradiction.\\
b) Since $\hat y_k =\hat y_{k-1}$, \eqref{eq:est_w} implies
\begin{equation}\label{w}
\|\hat w_k\|^2 \le \frac{8\delta}{\lam} \le \frac{\bar \eta^2}{16} 
\end{equation}
where the last inequality is due the definition of $\delta$ in step 0 of PBF. 
Observe that \eqref{eq:key_est} implies that 
\begin{equation}\label{var}
\hat \varepsilon_k \le 2\delta \le \frac{\bar \varepsilon}{8}
\end{equation}
where the last inequality is due the definition of $\delta$ in step 0 of PBF.
Finally, \eqref{eq:goal}, \eqref{w}, \eqref{var}
and Definition~\ref{def:appr} then imply
that $\hat y_k$ is a
$(\bar \eta/4,\bar\varepsilon/8;m)$-regularized stationary point  of problem \eqref{eq:opt_problem}.  
\end{proof}

The above corollary indicates that whenever PBF has a repeated cycle which yields $\hat y_k = \hat y_{k-1}$ for some $k \ge 1$, the method will terminate and return a $(\bar \eta/4,\bar\varepsilon/8;m)$-regularized stationary point  of problem \eqref{eq:opt_problem}.

\begin{lemma}\label{lem:key_recursive}
For every  $k \ge 1$, the following statements are true:
 \begin{itemize}  \item[a)]there holds \begin{equation}\label{eq:bdd_diff}
  \phi(\hat y_k) + \frac12 \left( \frac1\lam + m \right)
  \|\hat y_k-\hat y_{k-1}\|^2 
  \le \phi(\hat y_{k-1})
\end{equation}
where
$\{\hat y_{k}\}$ is as in step 3b of PBF;
\item[b)] for every $k\ge 2$, there holds
\begin{equation}\label{eq:wvar_bdd}
    \frac{1}{2}\left(\frac{1}{\lam}+m\right)\sum_{l=2}^k \| \hat y_l - \hat y_{l-1}\|^2 \le \hat M^{\lam}(\hat x_0) -  \phi(\hat y_k)+2\delta .
\end{equation}
\end{itemize}
\end{lemma}

\begin{proof}
a) It follows from \eqref{def:phim} and \eqref{eq:minseq} that 
\begin{align*}
  \phi(\hat y_k) + \frac12 \left( \frac1\lam + m \right)
  \|\hat y_k-\hat y_{k-1}\|^2 
  \le  \phi(\hat y_{k-1}) + \frac12 \left( \frac1\lam + m \right)
  \|\hat y_{k-1}-\hat y_{k-1}\|^2  = \phi(\hat y_{k-1}),
    \end{align*}
 which proves \eqref{eq:bdd_diff}.  \\
 b) Summing \eqref{eq:bdd_diff} from $k = 2$ to $k$, we have
 \begin{equation}\label{sum2}
    \phi(\hat y_k) + \frac{1}{2}\left(\frac{1}{\lam}+m\right)\sum_{l=2}^k \|\hat y_l - \hat y_{l-1}\|^2 \le \phi(\hat y_1).
 \end{equation}
Using \eqref{eq:control} with $k=1$, the definitions of $\hat M^{\lam}(\hat x_0)$ and $\theta_j$ in \eqref{eq:Moreau} and step 1 of PBF , respectively, and the fact that $\hat \Gamma_1 (\cdot)\le \phi_m(\cdot;\hat x_0)$, we can conclude that
\begin{align*}
 \phi(\hat y_1) + \frac{1}{2}\left(\frac{1}{\lam}+m\right) \|\hat y_1 - \hat y_{0}\|^2 &\stackrel{\eqref{eq:control}}\le \hat \theta_1 + \hat \delta_1 \stackrel{\eqref{eq:Moreau}}\le \hat M^{\lam}(\hat x_0) +\hat \delta_1 \\
 & \stackrel{\eqref{def:wj}}=  \hat M^{\lam}(\hat x_0)+ \delta+\frac{\lam}{8(m\lam+1)}\|\hat w_1\|^2, 
 \end{align*}
 where the identity is due to \eqref{def:wj}.
It thus follows from \eqref{eq:est_w} that
 \[
 \phi(\hat y_1) + \frac{1}{2}\left(\frac{1}{\lam}+m\right) \|\hat y_1 - \hat y_{0}\|^2 \overset{\eqref{eq:est_w}}\le \hat M^{\lam}(\hat x_0) +2\delta+ \frac{1}{2}\left(\frac{1}{\lam}+m\right) \|\hat y_1 - \hat y_{0}\|^2,
\]
which together with \eqref{sum2} implies that \eqref{eq:wvar_bdd}.
\end{proof}

We are now ready to bound the total
number of cycles generated by PBF.

\begin{proposition}\label{thm:outer}
 For a given tolerance pair $(\bar\eta,\bar \varepsilon) \in \R^2_{++}$, define
\begin{equation}\label{def:K}
K = K(\bar \eta,\bar\varepsilon) := \left\lceil 2\left(\frac{1}{\bar \varepsilon} + \frac{8(m \lam +1)}{\lam \bar \eta^2} \right)(\hat M^{\lam}(\hat x_0) - \phi^*) +1 \right\rceil
\end{equation}
 where $\hat M^\lam(\cdot)$ is as in \eqref{eq:Moreau}.
Then, PBF with $\delta$
as in \eqref{eq:def-tildem}
generates an iteration
index $k \le K(\bar \eta,\bar \varepsilon)$
such that
\begin{equation}\label{ineq:termination}
    \|\hat w_k\| \le \bar \eta,
\quad \hat \varepsilon_k \le
\bar \varepsilon.
\end{equation}
As a consequence,
$\hat y_k$ is 
a $(\bar \eta,\bar\varepsilon;m)$-regularized stationary point  of problem \eqref{eq:opt_problem}.
\end{proposition}

\begin{proof}
Assuming by contradiction that PBF does not stop at the $K$-th cycle.
Then, the stopping criterion in step 3b of PBF implies that
\[
\max \left \{ \frac{\|\hat w_k\|^2}{\bar \eta^2} , \frac{\hat \varepsilon_k}{\bar \varepsilon} \right\} > 1 \quad \forall k=1,\ldots,K
\]
and thus 
\begin{equation}\label{vioooo}
\frac{\|\hat w_k\|^2}{\bar \eta^2} + \frac{\hat \varepsilon_k}{\bar \varepsilon} >1 \quad \forall k=1,\ldots,K.
\end{equation}
Observe that the definitions of $\delta$ and $K$ in \eqref{eq:def-tildem} and \eqref{def:K}, respectively, imply that \begin{equation}\label{obsss}
    \left(\frac{4}{\bar \varepsilon} + \frac{16(m\lam+2)}{\lam \bar \eta^2} \right)\delta\le \frac{1}{2}, \quad \frac{1}{K-1}\left(\frac{1}{\bar \varepsilon}+ \frac{8(m\lam+1)}{\lam \bar \eta^2}\right)(  \hat M^{\lam}(\hat x_0) -  \phi^*) \le \frac{1}{2}.
\end{equation}
 Moreover, using \eqref{eq:key_est},   \eqref{eq:est_w} and \eqref{vioooo}, we have
\begin{align*}
1 &\overset{\eqref{vioooo}}< \frac{\|\hat w_k\|^2}{\bar \eta^2} + \frac{\hat \varepsilon_k}{\bar \varepsilon} \le  \frac{ 2\delta + (1 + m\lam)\|\hat y_{k} - \hat y_{k-1}\|^2/(2\lam)}{
       \bar \varepsilon
       }
       +\frac{
   8\delta +  4 (m \lam+1)^2\|\hat y_{k} - \hat y_{k-1}\|^2/\lam  }{\lam \bar \eta^2}\\
   & = \frac{2\delta}{\bar \varepsilon} + \frac{8\delta}{\lam \bar \eta^2}+  \left( \frac{  1 + m\lam}{
       2\lam \bar \varepsilon
       }
       +\frac{
    4 (m \lam+1)^2 }{\lam^2\bar \eta^2}\right) \|\hat y_{k} - \hat y_{k-1}\|^2.
    \end{align*}
    Summing the above inequality from $k=2$ to $K$, dividing the resulting inequality by $K-1$, and using \eqref{eq:wvar_bdd}, we have
    \begin{align*}
   1 &< \frac{2\delta}{\bar \varepsilon} + \frac{8\delta}{\lam \bar \eta^2}+  \left( \frac{  1 + m\lam}{
       2\lam \bar \varepsilon
       }
       +\frac{
    4 (m \lam+1)^2}{\lam^2\bar \eta^2}\right) \frac{1}{K-1}\sum_{k=2}^K \|\hat y_k - \hat y_{k-1}\|^2  \\
    &\overset{\eqref{eq:wvar_bdd}}\le \frac{2\delta}{\bar \varepsilon} + \frac{8\delta}{\lam \bar \eta^2}+  \left( \frac{  1 + m\lam}{
       2\lam \bar \varepsilon
       }
       +\frac{
    4 (m \lam+1)^2}{\lam^2\bar \eta^2}\right) \frac{2\lam}{(\lam m +1)(K-1)}(  \hat M^{\lam}(\hat x_0) -  \phi(\hat y_K) + 2\delta ) \\
    & \le \left(\frac{4}{\bar \varepsilon} + \frac{16(m\lam+2)}{\lam \bar \eta^2} \right)\delta+ \frac{1}{K-1}\left(\frac{1}{\bar \varepsilon}+ \frac{8(m\lam+1)}{\lam \bar \eta^2}\right)(  \hat M^{\lam}(\hat x_0) -  \phi^*)    \le 1
\end{align*}
where the second last inequality is due to the fact that $\phi(\hat y_K) \ge \phi^*$ and the last inequality is due  to \eqref{obsss}. Hence, the above inequality gives the desired contradiction and thus the statement is proved.
\end{proof}

Before ending this subsection, we
observe that
the quantity 
$ \hat M^{\lam}(\hat x_0) - \phi^*$ in \eqref{def:K} can be majorized by
the more standard
initial primal gap $\phi(\hat x_0) -\phi^*$ due to
the definition
of $ \hat M^{\lam}(\cdot)$ in \eqref{eq:Moreau}.

\subsection{Proof of Theorem \ref{thm:main1}}\label{subsec:proof}

Recall that Proposition \ref{thm:outer} bounds the
total number of cycles while
Proposition \ref{prop:innercomp} provides a
bound on the cardinality of every
cycle ${\cal C}_k$ in term of
$\sigma_{k}$. The following result
refines the latter result by
providing a uniform bound on
$\sigma_{k}$.

\begin{lemma}\label{lem:t1} 
      Define
\begin{equation}\label{def:bar t}
		    \bar t:=  M^2 + \beta_2 \left(\beta_1[\hat M^{\lam}(\hat x_0) - \phi^* + 2\delta] + 4\zeta \lam M^2 \right),
		\end{equation}
		where $\delta$ is as in \eqref{eq:def-tildem}, $\zeta$ is as in \eqref{label:def_zeta} ,  and $\beta_1$ and $\beta_2$ are as in \eqref{beta}. 
     Then,  the following statements hold for every $k \ge 1$:
     \begin{itemize}
         \item [a)]
         $ t_{i_k}\le \bar t$;
         \item[b)]
         $\sigma_{k} \le \bar \sigma$
         where $\sigma_k$ and $\bar \sigma$ are as in \eqref{def:tau} and \eqref{def:bar tau}, respectively.
     \end{itemize}	
	\end{lemma}

	\begin{proof}
a) 
	   	 It follows from \eqref{ineq:hpe1} with $j=i_k$ and the definition of $\theta_j$ in step 1 of PBF that
         \begin{align}
	        t_{i_k} & \stackrel{\eqref{ineq:hpe1}}= \phi_{m}(y_{i_k};\hat y_{k-1})  + \frac{1}{2\lam} \|y_{i_k} - \hat y_{k-1}\|^2 
	        -\theta_{i_k} \nn \\        &=\phi_{m}(y_{i_k};\hat y_{k-1}) + \frac{1}{2\lam} \|y_{i_k} - \hat y_{k-1}\|^2  - \Gamma_{i_k}(x_{i_k}) - \frac{1}{2\lambda}\|x_{i_k}-\hat y_{k-1}\|^2. \label{ineq:tik-1}
	    \end{align}
        Noting from the definition of $y_j$ given below \eqref{def:txj} that
        \[
        \phi_{m}(y_j;\hat y_{k-1}) + \frac{1}{2\lam} \| y_j -\hat y_{k-1}\|^2 
   \le \phi_{ m}(x_j;\hat y_{k-1}) + \frac{1}{2\lam} \| x_j -\hat y_{k-1}\|^2.
        \]
        Hence, the above inequality with $j=i_k$ and \eqref{ineq:tik-1} imply that
        \[
        t_{i_k} \le \phi_{m}(x_{i_k};\hat y_{k-1})  - \Gamma_{i_k}(x_{i_k}) \stackrel{\eqref{eq:def-tildem},\eqref{eq:serious}} \le f_m(x_{i_k};\hat y_{k-1}) - \ell_{f}(x_{i_k};\hat y_{k-1}),
        \]
        where the second inequality follows from \eqref{eq:def-tildem}, \eqref{eq:serious}, and the definition of $\phi_m$  in \eqref{def:phim}.
    Moreover, applying \eqref{eq:fm-linear1} with  $z = \hat y_{k-1}$ , we have
	\[
     f_{m}(x_{i_k};\hat y_{k-1}) - \ell_{ f}(x_{i_k};\hat y_{k-1}) \le 2  M \|x_{i_k}-\hat y_{k-1}\|  + \frac{ L + 
    m}2 \|x_{i_k}-\hat y_{k-1}\|^2.
	\]
    Combining the above two inequalities, we have
	    \begin{equation}\label{eq:im4}
	         t_{i_k} 
	        \le 2 M \|x_{i_k}-\hat y_{k-1}\| + \frac{ L+m }{2}\|x_{i_k}-\hat y_{k-1}\|^2 
	        \le  M^2 + \left( \frac{ L+m}{2}+1\right)\|x_{i_k}-\hat y_{k-1}\|^2,
	    \end{equation}
	  where the  last inequality is due to the fact that $2ab \le a^2+b^2$ with $a = M$ and $b = \|x_{i_k}-\hat y_{k-1}\|$.

	  We will now  bound $\|x_{i_k}-\hat y_{k-1}\|^2$.
 It follows from  \eqref{eq:wvar_bdd}
 that
	  \[
	      \frac{1}{2}\left(m + \frac{1}{\lam}\right)\|\hat y_{k}-\hat y_{k-1}\|^2 \le \hat M^{\lam}(\hat x_0) - \phi(\hat y_{k})     + 2\delta. 
	  \]
   which, in view of the definition of $\beta_1$ in \eqref{beta}, is equivalent to
	  \[
	      \phi(\hat y_k) - \phi^* + \frac{1}{2\beta_1}\left(m + \frac{2}{\zeta \lam}\right)\|\hat y_{k}-\hat y_{k-1}\|^2 \le \hat M^{\lam}(\hat x_0) - \phi^* + 2\delta.
	  \]
      It is easy to verify that $\beta_1\ge 1$ from its definition in \eqref{beta}.
      Clearly, this observation, the fact that $\phi(\hat y_k) \ge \phi^*$, and the above inequality imply that
        \begin{equation}
	      \phi(\hat y_k) - \phi^* + \frac{1}{2}\left(m + \frac{2}{\zeta \lam}\right)\|\hat y_{k}-\hat y_{k-1}\|^2 \le \beta_1[\hat M^{\lam}(\hat x_0) - \phi^* + 2\delta].
	      \label{eq:im2}
	  \end{equation}
      Using this inequality and
       Lemma \ref{lem:bdd_smdist} with $(\Gamma,z_0,u) = (\Gamma_{i_k},\hat y_{k-1},\hat y_k)$, we obtain
	  \begin{align*}
	   \zeta^2\left(\frac{1}{4\zeta \lam}+\frac{m}{2} \right)  \|x_{i_k}- \hat y_{k-1} \|^2
	&\stackrel{\eqref{eq:uniform_bdd}}\le
	\phi(\hat y_k) - \phi ^ * + \frac12 \left(m+\frac{2}{\zeta \lam} \right) \|\hat y_k - \hat y_{k-1}\|^2 + 4\zeta \lam M^2   \\
 &\stackrel{\eqref{eq:im2}} \le \beta_1 [ \hat M^{\lam}(\hat x_0)- \phi^* + 2\delta] + 4\zeta \lam M^2.
	  \end{align*}
 
	The statement now follows by combining \eqref{eq:im4} and the above inequality, and using the definitions of $\beta_2$ and $\bar t$ in \eqref{beta} and \eqref{def:bar t}, respectively.
 
 b) This statement follows from a) and the definitions of $\sigma_{k}$ and $\bar \sigma$ in \eqref{def:tau} and \eqref{def:bar tau}, respectively.
 \end{proof}
 

We end this section by noting that
the proof of Theorem \ref{thm:main1}
 now follows immediately from Propositions~\ref{prop:innercomp} and \ref{thm:outer}, and Lemma \ref{lem:t1}(b).

\section{Computational Results}\label{num}
This section reports the computational results for the PBF method of Subsection~\ref{subsec:update} against the PS method of Subsection~\ref{SEC:SCS}. It contains two subsections. The first one presents the computational results for the phase retrieval problem. The second one showcases the computational results for the blind deconvolution problem.



Both phase retrieval and blind deconvolution  are special cases of \eqref{eq:opt_problem} where $h\equiv0$ and $f(\cdot)$ has the form
\begin{equation}\label{fg}
f(x)=g(c(x))
\end{equation}
for some convex and $L$-Lipschitz function $g$  and smooth map $c$ with $\beta$-Lipschitz Jacobian.
It follows from  Lemma 4.2 in \cite{drusvyatskiy2019efficiency} that such a function $f(\cdot)$  is $L\beta$-weakly convex and hence we set $m = L\beta$. Recall that the PS method terminates based on Moreau stationary point (see Definition \ref{def:Moreau}) which is hard to verify in view of the remark below Proposition \ref{prop:Davis}. On the other hand, it is still an open problem for the PS method to establish the convergence to find a  regularized stationary point (see Definition~\ref{def:appr}).
Hence for the sake of our numerical experiments, we adopt the termination criterion 
\begin{equation}\label{rela-var}
\phi(x_k) - \phi_* \le \tilde \varepsilon [ \phi(x_0)-\phi_*]
\end{equation}
that requires the knowledge of $\phi_*$ where either $\tilde \varepsilon =  10^{-3}$ or $10^{-4}$. 

 Now we provide details of the algorithms used in the following two subsections. 
We implement the constant stepsize version of PS method, i.e.,  $\alpha_t = \alpha$ for every $t \ge 0$. In our experiment, we choose four values of $\alpha$, namely,  $\alpha = 1/(32m)$, $1/(8m)$, $1/(2m)$, and $1/m$. Hence in our experiment: (i) the choice of $\alpha$ for the PS method does not follow  ${\cal{ O}}(1/\sqrt{T})$ recipe of Proposition \ref{prop:Davis} where $T$ is the total number of PS iterations; (ii)   $T$ is determined by \eqref{rela-var} rather than being pre-specified.
PBF sets $\lam = 1/2m$ and
       $\delta = \tilde \varepsilon [ \phi(x_0)-\phi_*]$. 
  Two variants of PBF are implemented: one based on the 2-cut scheme and the other one based on the multi-cut scheme (see Subsection \ref{sec:BUF}). Due to the fact that $h=0$, the subproblem \eqref{def:subprob} in the 2-cut scheme has a closed-form solution . In the multi-cut scheme, we utilize  Mosek 10.2 \footnote{https://docs.mosek.com/latest/toolbox/index.html}  to solve subproblem \eqref{def:subprob}.
     
Finally, all experiments were performed in MATLAB 2023a and run on a PC with a 16-core Intel Core i9 processor and 32 GB of memory.

\subsection{Phase retrieval}\label{pr}
This subsection reports  computational results comparing PBF  against  PS on the phase retrieval problem.

Consider the  Phase Retrieval problem (see Subsection 5.1 of  \cite{davis2019stochastic})
\begin{equation}\label{phase}
\min _{x \in \mathbb{R}^d} \left\{f(x) := \frac{1}{n}\sum_{i=1}^nf_i(x)\right\} , \quad f_i(x):= \left|\left\langle a_i, x\right\rangle^2-b_i\right|
\end{equation}
where $a_i\in \R^d$ and $b_i \in \R$. 
Observe that $f_i(\cdot)$ is a special case of \eqref{fg} with $g(\cdot)= |\cdot|$ and $c(\cdot) =\left\langle a_i, \cdot\right\rangle^2-b_i $. Thus $f_i$ is $2\|a_i a_i^T\|$-weakly convex, and we set 
\[
m  =\frac{2}{n}\sum_{i=1}^n\|a_i a_i^T\|.
\]
Now we describe the data generation process. Vectors $\{a_i\}$ are i.i.d. generated according to standard Gaussian measurements $ N\left(0, I_{d \times d}\right)$. Then we generate the target signal $\bar{x}$ and initial point $x_0$ uniformly on the unit sphere; and set $b_i=\left\langle a_i, \bar{x}\right\rangle^2$ for each $i=1, \ldots, n$.  Thus the optimal value $\phi_*=0$ for \eqref{phase}.
We perform four sets of experiments corresponding to $(d, n)=(100,300),(200,600)$, $(500,1500)$,$(1000,3000)$ and record the results in Tables \ref{tabfirstpb:cpuobj1} and \ref{tabfirstpb:cpuobj2}.

 We now describe some details about  the tables that appear in this subsection.   The second to fifth columns provide numbers of
 iterations and running times for the six variants. 
 An entry in each table has two numbers where the first one expresses the (rounded) number of iterations and the  second one expresses the CPU running time in seconds. An entry marked as $*/*$ indicates that the CPU running time exceeds the two hours time limit. 







\begin{table}[H]
\centering
\begin{tabular}{|c|c|c|c|c|}
\hline
\multicolumn{1}{|c|}{-} &
\multicolumn{1}{|c|}{$L_1: (100,300)$}&
\multicolumn{1}{|c|}{$L_2: (200,600)$}&
\multicolumn{1}{|c|}{ $L_3: (500,1500)$}&
\multicolumn{1}{|c|}{ $L_4: (1000,3000)$}\\
\hline
\multirow[t]{10}{*}{PS: $\alpha = 1/(32m)$} 
&50852/1.48 & 203416/17.82 &*/*& */*\\
\hline
\multirow[t]{10}{*}{PS: $\alpha = 1/(8m)$} 
&14852/0.47 &  64294/5.58 &432176/201.12& 235700/650.21\\
\hline
\multirow[t]{10}{*}{PS: $\alpha = 1/(2m)$} 
& 4436/0.12 & 20011/1.76 &345346/167.21& 79165/216.67\\
\hline
\multirow[t]{10}{*}{PS: $\alpha = 1/m$} 
&*/* & */* &*/*& 67095/183.56\\
\hline
\multirow[t]{10}{*}{PBF: Twocut} 
&5088/0.16 &24486/2.08&316782/154.21& 87514/242.62\\
\hline

\multirow[t]{10}{*}{PBF: Multicut} 
&4716/0.19 & 21345/2.31 &278234/172.32& 723475/274.38\\
\hline
\end{tabular}
\caption{PS versus two variants of PBF on \eqref{phase}. A relative tolerance of $\tilde \varepsilon  = 10^{-3}$ is set.}
\label{tabfirstpb:cpuobj1}
\end{table}

\begin{table}[H]
\centering
\begin{tabular}{|c|c|c|c|c|}
\hline
\multicolumn{1}{|c|}{-} &
\multicolumn{1}{|c|}{$L_1: (100,300)$}&
\multicolumn{1}{|c|}{$L_2: (200,600)$}&
\multicolumn{1}{|c|}{ $L_3: (500,1500)$}&
\multicolumn{1}{|c|}{ $L_4: (1000,3000)$}\\
\hline
\multirow[t]{10}{*}{PS: $\alpha = 1/(32m)$} 
&72376/2.11 & 380165/31.32 &653256/305.65& */*\\
\hline
\multirow[t]{10}{*}{PS: $\alpha = 1/(8m)$} 
&20328/0.61 & 118589/9.91 & */*
& 424433/1.03E+03\\
\hline
\multirow[t]{10}{*}{PS: $\alpha = 1/(2m)$} 
&*/* & */* &*/*& */*\\
\hline
\multirow[t]{10}{*}{PS: $\alpha = 1/m$} 
&*/* & */* &*/*& */*\\
\hline
\multirow[t]{10}{*}{PBF: Twocut} 
&15228/0.42&125603/10.61&1056034/454.93& 742553/1.86E+03
\\
\hline
\multirow[t]{10}{*}{PBF: Multicut} 
&13762/0.56 & 104572/12.41&923202/511.21& 621001/2.02E+03\\
\hline
\end{tabular}
\caption{PS versus two variants of PBF on \eqref{phase}. A relative tolerance of $\tilde \varepsilon  = 10^{-4}$ is set.}
\label{tabfirstpb:cpuobj2}
\end{table}
The two tables show that the PS method is sensitive to the choice of stepsize, and the optimal stepsize in terms of CPU running time varies with different tolerances. In contrast, PBF with one stepsize demonstrates performance comparable to the best-performing stepsize of PS across all four stepsizes for both tolerance levels.

\subsection{Blind deconvolution}\label{BD}
This subsection reports  computational results comparing PBF  against PS on the blind deconvolution problem.
Consider the blind deconvolution problem (see Subsection 5.2 of  \cite{davis2019stochastic}) 
\begin{equation}\label{blind}
\min _{x, y} \left\{f(x) :=\frac{1}{n} \sum_{i=1}^n f_i (x)\right\}, \quad f_i(x):=\left|\left\langle u_i, x\right\rangle\left\langle v_i, y\right\rangle-b_i\right|
\end{equation}
where $u_i, v_i \in \R^d$ and $b_i \in \R$. Note that \eqref{blind} is a special instance of \eqref{eq:opt_problem} with $h\equiv0$. Observe that $f_i(\cdot)$ is a special case of \eqref{fg} with $g(\cdot)= |\cdot|$ and $c(x,y) =\left\langle u_i, x\right\rangle\left\langle v_i, y\right\rangle-b_i $. Thus $f_i(\cdot)$ is $\|u_i v_i^T\|$-weakly convex, and we set 
\[
m  =\frac{1}{n}\sum_{i=1}^n\|u_i v_i^T\|.
\]
Now we describe the data generation process. Vectors $\{u_i\}$,  $\{v_i\}$ are i.i.d. generated Gaussian measurements $ N\left(0, I_{d \times d}\right)$. Then we generate the target signals $\bar{x},\bar y$ and initial points $x_0,y_0$ uniformly on the unit sphere; and set $b_i=\left\langle u_i, \bar x\right\rangle\left\langle v_i, \bar y\right\rangle$ for each $i=1, \ldots, n$. 
Thus the optimal value $\phi_*=0$ for \eqref{phase}.
We perform four sets of experiments corresponding to $(d, n)=(100,300),(200,600)$, $(500,1500)$,$(1000,3000)$ and record the results in Tables \ref{tabfirstpb:cpuobj3} and \ref{tabfirstpb:cpuobj4}. The explanations of the tables are the same as in Subsection \ref{pr}.
\begin{table}[H]
\centering
\begin{tabular}{|c|c|c|c|c|}
\hline
\multicolumn{1}{|c|}{-} &
\multicolumn{1}{|c|}{$L_1: (100,300)$}&
\multicolumn{1}{|c|}{$L_2: (200,600)$}&
\multicolumn{1}{|c|}{ $L_3: (500,1500)$}&
\multicolumn{1}{|c|}{ $L_4: (1000,3000)$}\\
\hline
\multirow[t]{10}{*}{PS: $\alpha = 1/(32m)$} 
&76282/2.21 & 812578/143.48 &*/*& */*\\
\hline
\multirow[t]{10}{*}{PS: $\alpha = 1/(8m)$} 
&16852/4.41 &  224893/39.62 &121091/350.21& 343700/740.55\\
\hline
\multirow[t]{10}{*}{PS: $\alpha = 1/(2m)$} 
& 5425/0.42 & */* &92137/241.41& 83125/288.21\\
\hline
\multirow[t]{10}{*}{PS: $\alpha = 1/m$} 
&*/* & */* &*/*& 64035/172.12\\
\hline
\multirow[t]{10}{*}{PBF: Twocut} 
&5021/0.39 &344221/72.68&101299/280.67& 77511/212.78\\
\hline
\multirow[t]{10}{*}{PBF: Multicut} 
&4396/0.45 & 293115/80.18&93231/310.45& 69932/232.65\\
\hline
\end{tabular}
\caption{PS versus two variants of PBF on \eqref{blind}. A relative tolerance of $\tilde \varepsilon  = 10^{-3}$ is set.}
\label{tabfirstpb:cpuobj3}
\end{table}
\begin{table}[H]
\centering
\begin{tabular}{|c|c|c|c|c|}
\hline
\multicolumn{1}{|c|}{-} &
\multicolumn{1}{|c|}{$L_1: (100,300)$}&
\multicolumn{1}{|c|}{$L_2: (200,600)$}&
\multicolumn{1}{|c|}{ $L_3: (500,1500)$}&
\multicolumn{1}{|c|}{ $L_4: (1000,3000)$}\\
\hline
\multirow[t]{10}{*}{PS: $\alpha = 1/(32m)$} 
&152976/6.12 & 1701581/300.33 &*/*& */*\\
\hline
\multirow[t]{10}{*}{PS: $\alpha = 1/(8m)$} 
&41328/1.39 & 283353/50.32 & 173291/670.81
& 824121/1.93E+03\\
\hline
\multirow[t]{10}{*}{PS: $\alpha = 1/(2m)$} 
&*/* & */* &*/*& */*\\
\hline
\multirow[t]{10}{*}{PS: $\alpha = 1/m$} 
&*/* & */* &*/*& */*\\
\hline
\multirow[t]{10}{*}{PBF: Twocut} 
&35227/1.21 &394212/89.32&188823/690.21& 1142572/2.65E+03
\\
\hline
\multirow[t]{10}{*}{PBF: Multicut} 
&31444/1.51 & 378932/111.34 &177012/739.32& 1023990/2.93E+03\\
\hline
\end{tabular}
\caption{PS versus two variants of PBF on \eqref{blind}. A relative tolerance of $\tilde \varepsilon  = 10^{-4}$ is set.}
\label{tabfirstpb:cpuobj4}
\end{table}

The two tables above show that the PS method is sensitive to the choice of stepsize, with the optimal stepsize for CPU running time varying depending on the tolerance. On the other hand, PBF performs comparably to the best-performing PS stepsize across all four choices, for both tolerance levels.

\section{Concluding Remarks}
\label{sec:conclusion}


In this section, we provide some further remarks and directions for future research.

First, from the point of view of the sequences of
serious iterates $\{\hat x_k\}$ and $\{\hat y_k\}$,
the complexity result for
PBF is point-wise since it
is about a single iterate
from $\{\hat y_k\}$.
It would be also interesting to
establish an ergodic complexity result about a weighted average of
such sequence. 

Second, as already observed in the fifth remark following PBF, PBF
requires the knowledge of a
weakly convex parameter,
i.e., a scalar $m$ as in
Assumption (A1), and hence is not a universal method.
It would be interesting to develop an adaptive method
which do not require
a scalar $m$ as above, but instead generates adaptive estimates for it which possibly violate the condition on (A1).

Third, paper
\cite{davis2019proximally} presents an
inexact proximal point method (see Algorithm 2 of  \cite{davis2019proximally}) for solving \eqref{eq:opt_problem} where the subgradient method (SM) is used to obtain an approximate solution $\hat x_k$ of \eqref{sub_pro}. More specifically, under the condition that
$j_k \ge 11 (1+\lam m)^2$, it is shown that
$j_k$ iterations of the SM initialized from $\hat y_{k-1}$ yield a point $\hat x_k$ satisfying
 \[
\phi(\hat x_k) - \phi(\hat y_{k-1}) \le \frac{72 \lam M^2}{j_k+1} - \frac1{4\lam} \|\hat y_{k-1}- x_k^*\|^2
\]
where $x_k^*$ is the minimizer of \eqref{sub_pro}.
Hence, if
$j_k$ is chosen as
\[
j_k = \bar j :=\left\lceil \max \left\{\frac{576 \lam^2 M^2}{ \varepsilon^2}, \frac{576 (1+\lam m)^2 M^2}{\varepsilon^2}, 11 (1+\lam m)^2 \right\}\right\rceil,
\]
then using \eqref{eq:new_grad}, we conclude that
\[
\frac{\lam^2}{(1+\lam m)^2}\|\nabla \hat M^{\lam}(\hat y_{k-1})\|^2 = \|\hat y_{k-1}-x_k^*\|^2 \le
     4\lam [\phi(\hat y_{k-1}) - \phi(\hat x_k)  ] + \frac{\lam^2 \varepsilon^2}{2(1+\lam m)^2}, 
\]
and hence that
\begin{equation}\label{ineq:Moreau-bound}
    \inf_{k \le K} \|\nabla \hat M^{\lam}(\hat y_{k-1})\|^2 \le \frac{4(1+\lam m)^2}{\lam K} [ \phi(x_0) - \phi(\hat x_K)] + \frac{\varepsilon^2}{2}.
\end{equation}
The above conclusion gives an easily computable bound on
the gradient of the Moreau envelope at $\hat y_{k-1}$.
A drawback of the above bound  is that it only holds if SM performs a prespecified number of iterations
$j_k \ge \bar j$.
In contrast, it is shown 
in Appendix \ref{sec:termination}
that the iteration sequence  $\{(\hat x_k,\hat y_k)\}$ satisfies a
 bound similar to \eqref{ineq:Moreau-bound} even though it solves \eqref{sub_pro} only according to
 the stopping criterion \eqref{ineq:hpe1},
 and hence without performing a prespecified number of inner  iterations
 as SM does.\\
 

\bibliographystyle{plain}
\bibliography{Bundle_weakly}

\appendix

\section{Technical Results about Subdifferentials}
\label{APP:sub}


This section presents two
technical results about
$\varepsilon$-subdifferentials
that will be used in our analysis.

The first result describes
a simple relationship 
involving $\varepsilon$-subdifferentials
of $\phi_m(\cdot;x)$
for different points $x$.
\begin{lemma}
If $\phi: \mathbb{R}^{n} \rightarrow \mathbb{R} \cup\{\infty\}$ is an $m$-weakly convex function, then
$\phi_m(\cdot;c)$ is convex for
every $c \in \R^n$.
Moreover,
for every $x \in \dom \phi$,
$c \in \R^n$, and
$\varepsilon \ge 0$, we have
\[
\partial_{\varepsilon} \left[\phi_m(\cdot;x) \right](x) = \partial_{\varepsilon} \left[\phi_m(\cdot;c) \right](x) -  m(x-c).
\]
\label{lem:chara_weakly}
\end{lemma}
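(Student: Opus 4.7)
The plan is to handle the two assertions in order, starting with the convexity claim since it is the easier one and clarifies why the identity makes sense.

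For the first claim, I would write
\[
\phi_m(\cdot;c) = \phi(\cdot) + \tfrac{m}{2}\|\cdot\|^2 - m\langle c,\cdot\rangle + \tfrac{m}{2}\|c\|^2,
\]
observe that the first two terms together form a convex function by the definition of $m$-weak convexity of $\phi$, and note that the remaining terms are affine. Adding affine terms preserves convexity, so $\phi_m(\cdot;c)$ is convex.

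For the second claim, the approach is purely algebraic: rewrite the $\varepsilon$-subdifferential inclusion for $\phi_m(\cdot;x)$ at the point $x$ directly in terms of $\phi_m(\cdot;c)$ by expanding the quadratic. Unfolding the definition in \eqref{def:subdif}, $v\in\partial_\varepsilon[\phi_m(\cdot;x)](x)$ is equivalent (after using $\phi_m(x;x)=\phi(x)$) to
\[
\phi(y) + \tfrac{m}{2}\|y-x\|^2 \ge \phi(x) + \langle v,y-x\rangle - \varepsilon \qquad \forall y\in\R^n.
\]
The key identity to plug in is
\[
\tfrac{m}{2}\|y-c\|^2 - \tfrac{m}{2}\|x-c\|^2 = \tfrac{m}{2}\|y-x\|^2 + m\langle x-c,y-x\rangle,
\]
which lets me replace $\tfrac{m}{2}\|y-x\|^2$ in the previous inequality by $\phi_m(y;c)-\phi_m(x;c)-(\phi(y)-\phi(x))-m\langle x-c,y-x\rangle$. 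After cancellation, the inequality becomes
\[
\phi_m(y;c) \ge \phi_m(x;c) + \langle v+m(x-c),y-x\rangle - \varepsilon \qquad \forall y\in\R^n,
\]
i.e.\ $v+m(x-c)\in\partial_\varepsilon[\phi_m(\cdot;c)](x)$. Reversing the manipulation (every step is an equivalence) yields the reverse inclusion, producing the desired set equality.

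There is no real obstacle here: both parts amount to unwinding the definitions and applying the identity $\|y-c\|^2-\|x-c\|^2=\|y-x\|^2+2\langle x-c,y-x\rangle$. The only point worth being careful about is that the shift $-m(x-c)$ on the right side of the claimed equation is a single deterministic vector (not a set), so the translation of the whole $\varepsilon$-subdifferential set by this vector is unambiguous, and the above chain of equivalences shows the shift occurs with exactly the right sign.
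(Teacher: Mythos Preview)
Your proof is correct and follows essentially the same approach as the paper's: both arguments reduce the subdifferential identity to the algebraic relation
\[
\phi_m(u;c)-\phi_m(x;c)-\langle v+m(x-c),u-x\rangle=\phi_m(u;x)-\phi(x)-\langle v,u-x\rangle,
\]
which is exactly the quadratic identity you use, and then invoke the definition of the $\varepsilon$-subdifferential. The paper states this identity in one line and omits the convexity verification as immediate, whereas you spell out both steps more explicitly, but there is no substantive difference.
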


\begin{proof}
Let $x\in \dom \phi$, $c \in \R^n$, and
$\varepsilon \ge 0$ be given. Then,
using
the definition of $\phi_m(\cdot;\cdot)$
in \eqref{def:phim}, we easily see that
\[
\phi_m(u;c)-\phi_m(x;c) - \inner{v+m(x-c)}{u-x}
= \phi_m(u;x) - \phi(x)  - \inner{v}{u-x} \quad \forall  u, v \in \R^n.
\]
The result now follows from the above identity and the definition of the $\varepsilon$-subdifferential
in \eqref{def:subdif}.
\end{proof}

The second results describes the relationship between an $\varepsilon$-solution and its global minimizer for a strongly convex function.
\begin{lemma}
If $g$ is a closed $\mu$-strongly convex function and $y$ is an
$\varepsilon$-solution of $g$, i.e., $0 \in \partial_\varepsilon g(y) $, then
its global minimizer $\hat y$ satisfies
\begin{equation}\label{eq:B1}
0 \in \partial g(\hat y), \quad \|y-\hat y \| \le \sqrt{\frac{2\varepsilon}\mu}.
\end{equation}
\label{lem:mu convex}
\end{lemma}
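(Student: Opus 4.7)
The plan is to split the conclusion into two pieces: (i) existence of a minimizer $\hat y$ with $0 \in \partial g(\hat y)$, and (ii) the quantitative bound $\|y - \hat y\| \le \sqrt{2\varepsilon/\mu}$.

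For (i), I would invoke the standard fact that a closed $\mu$-strongly convex function with $\mu > 0$ is coercive and hence attains its infimum at a unique point $\hat y$, and that the optimality condition for an unconstrained convex minimization yields $0 \in \partial g(\hat y)$. This step is routine and uses only textbook convex analysis.

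For (ii), the idea is to combine two inequalities. First, the hypothesis $0 \in \partial_\varepsilon g(y)$ unfolds via the definition of the $\varepsilon$-subdifferential in \eqref{def:subdif} (applied to $g$ at $y$ with $s = 0$) into
\[
g(u) \ge g(y) - \varepsilon \qquad \forall u \in \R^n,
\]
and in particular $g(\hat y) \ge g(y) - \varepsilon$. Second, the $\mu$-strong convexity of $g$ together with $0 \in \partial g(\hat y)$ yields the standard quadratic lower bound
\[
g(y) \ge g(\hat y) + \frac{\mu}{2}\|y - \hat y\|^2.
\]
Chaining these two inequalities gives $\frac{\mu}{2}\|y - \hat y\|^2 \le \varepsilon$, from which the desired estimate in \eqref{eq:B1} follows by rearranging and taking square roots.

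I do not anticipate any real obstacle here; the only small care-point is justifying the quadratic lower bound from strong convexity plus $0 \in \partial g(\hat y)$, which can either be cited as a standard consequence of strong convexity or derived by applying the strong-convexity definition to the pair $(\hat y, y)$ and passing to the subgradient inequality at $\hat y$. All other manipulations are one-line substitutions into \eqref{def:subdif}.
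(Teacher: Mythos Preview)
Your proposal is correct and mirrors the paper's own proof: both use the quadratic lower bound $g(y) - g(\hat y) \ge \tfrac{\mu}{2}\|y-\hat y\|^2$ from strong convexity at the minimizer, together with $g(y) - g(\hat y) \le \varepsilon$ from the hypothesis $0 \in \partial_\varepsilon g(y)$, and then rearrange. Your added remark on the existence of $\hat y$ is a harmless elaboration the paper leaves implicit.
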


\begin{proof}
The inclusion in \eqref{eq:B1} follows from the fact that
$\hat y$ is a global minimizer of $g$.
Since g is $\mu$-strongly convex and $\hat y$ is its global minimizer, we have 
\[
\frac{\mu}{2}\|y - \hat y\|^2 \le g(y) - g(\hat y) 
 \le \varepsilon
\]
where the second inequality is due to $0 \in \partial_\varepsilon g(y)$.
hence, the inequality in \eqref{eq:B1} follows.
\end{proof}



\section{Relationships Between Notions of Stationary Points}\label{App:relation}
This section contains three proofs for the results in Section~\ref{Sec:background}.

\noindent
	{\bf Proof of Proposition \ref{prop:relation}}
 For this proof only, 
we  let
\begin{equation}\label{def:psi}
    \psi(\cdot) := \phi(\cdot) + \frac{\lam^{-1}+m}{2} \|\cdot-x\|^2
\end{equation}
and denote $\hat x^\lam(x)$ as in \eqref{eq:hatx}. Observe that $\psi$ is a $(1/\lam)$-strongly convex function in view of the fact that $\phi$ is $m$-weakly convex. Thus, for any $u \in \dom \psi$, it holds that
\begin{equation}\label{eq:psimu}
  \psi(u) - \psi(\hat x) \ge \frac{1}{2\lam}\|u-\hat x\|^2.  
\end{equation}
We start with the proof of a).

   a) 
Since $x$ is a $(\varepsilon_D,\delta_D)$-directional stationary point, there exits a $\tilde x$ such that
\begin{equation}\label{eq:DtoM}
\norm{x-\tilde x} \le \delta_D \quad 
\inf_{\|d\|\le 1} \phi'(\tilde x;d) \ge -\varepsilon_D.
\end{equation}
Then for any $d \in \R^n$,
we have
\begin{align*}
   \psi'(\tilde x;d) =
\phi'(\tilde x;d) + \left(\frac{1}{\lam}+m\right)\inner{\tilde x-x}{d} \ge -\varepsilon_D \|d\| - \left(\frac{1}{\lam}+m\right)\delta_D \|d\|
= - \left[\varepsilon_D + \left(\frac{1}{\lam}+m\right)\delta_D \right] \|d\|.
\end{align*}
Using the convexity of $\psi$  and the above relation with $d = \hat x^\lam(x)- \tilde x $, we conclude  that
\begin{equation}\label{eq:sped}
\psi(\hat x) - \psi(\tilde x) \ge 
\psi'(\tilde x;\hat x-\tilde x)
\ge 
- \left[\varepsilon_D + \left(\frac{1}{\lam}+m\right)\delta_D \right] \|\hat x-\tilde x\|.
\end{equation}
Then using the above inequality and \eqref{eq:psimu} with $u = \tilde x$ we can conclude that
\[
 \varepsilon_D + \left(\frac{1}{\lam}+m\right) \delta_D  \ge \frac{1}{2\lam} \|\tilde x-\hat x\|.
\]
Thus \eqref{eq:DtoM} further implies that 
\[
\|x-\hat x\| \le \norm{x-\tilde x} + \norm{\tilde x-\hat x} 
\le \delta_D + 2\lam \left[\varepsilon_D+\left(\frac{1}{\lam}+m\right)\delta_D \right].
\] 
and hence using \eqref{eq:new_grad}  we can get
\[
\|\nabla \hat M^{\lam}(x)\| = \left(m+\frac{1}{\lam}\right)\|\hat x^\lam(x)- x\| \le \left(m+\frac{1}{\lam}\right)\left[(3+2\lam m)\delta_D + 2\lam \varepsilon_D \right].
\]
Now the statement follows from the above inequality and the definition of Moreau stationary point in Definition \ref{def:Moreau}.

b) Since $x$ is a $(\varepsilon_M;\lam)$-Moreau stationary point, \eqref{eq:new_grad}  thus imply that 
\[
\left(\frac{1}{\lam}+m\right)\|x- \hat x\| \le \varepsilon_M.
\]
Then for any $d \in \R^n$ such that $\|d\| \le 1$, we have
\begin{align*}
   0 \le \psi'(\hat x;d) =
\phi'(\hat x;d) + \left(\frac{1}{\lam}+m\right)\inner{\hat x-x}{d} \le 
\phi'(\hat x;d)
+ \left(\frac{1}{\lam}+m\right) \norm{x-\hat x} \le \phi'(\hat x;d)+ \varepsilon_M
\end{align*}
and
\[
 \|x-\hat x\| \le \frac{\varepsilon_M}{m+1/\lam}.
\]
Choose $\tilde x = \hat x$, $\varepsilon_D = \varepsilon_M$ and 
$\delta_D =\varepsilon_M/(m+1/\lam)$.
Then the statement follows from the above relation and the definition of $(\varepsilon_D,\delta_D)$-directional stationary point.
\QEDA

\vgap

\noindent
	{\bf Proof of Proposition \ref{prop:subdir}}
Fix $x\in \dom \phi$. For simplicity, we denote
\begin{equation}\label{rel:phim}
    \phi_m(\cdot) = \phi_m(\cdot;x) \stackrel{\eqref{def:phim}}= \phi(\cdot) + \frac{m}{2}\|\cdot - x\|^2.
\end{equation}
 Since $\phi_m$ is convex, it follows from Theorem 3.26 of \cite{beck2017first} that
\begin{equation}\label{eq:directional}
    -\inf_{\|d\| \leq 1} \phi_m^{\prime}(x;d)
    = -\inf_{\|d\| \leq 1}\sigma_{\partial \phi_m(x)}(d)
    = - \inf_{\|d\| \leq 1} \sup_{s\in \partial \phi_m(x)}\inner{s}{d}
    = \inf_{s\in \partial \phi_m(x)}\|s\|
    =\operatorname{dist}(0 ; \partial \phi_m(x)).
\end{equation}
In view of \eqref{rel:phim}, Proposition \ref{prop:Fre_sub} shows that $\partial \phi(x) = \partial \phi_m(x)$, which together with \eqref{eq:directional} implies that
\[
\operatorname{dist}(0 ; \partial \phi(x)) = \operatorname{dist}(0 ; \partial \phi_m(x)) = -\inf _{\|d\| \leq 1} \phi_m^{\prime}(x; d).
\]
Moreover, it follows from Definition \ref{def:di-deriv} and \eqref{rel:phim} that
\[
\phi_m'(x;d) = \liminf _{t \downarrow 0} \frac{\phi_m(x+td)-\phi_m(x)}{t} \stackrel{\eqref{rel:phim}}= \liminf _{t \downarrow 0} \frac{\phi(x+td) + \frac{m}{2}\|td\|^2-\phi(x)}{t} = \phi'(x;d).
\]
Therefore, the above two relations indicate that
\[
\operatorname{dist}(0 ; \partial \phi(x)) = -\inf _{\|d\| \leq 1} \phi^{\prime}(x;d),
\]
and the conclusion directly follows from Definition \ref{def:dsp}.
 \QEDA

\vgap
 
\noindent
	{\bf Proof of Proposition \ref{prop:ours}}
a)
Since $x$ is a
$(\bar \eta,\bar\varepsilon;m)$-regularized stationary point of $\phi$, 
there exists a pair $(w,\varepsilon) \in \R^n \times \R_{++}$ satisfying
\eqref{eq:app_sol}.
Using the fact that
$\phi_m(x;x) = \phi_{2m}(x;x)$ and
$\phi_m(\cdot;x) \le \phi_{2m}(\cdot;x)$, we easily see that
the inclusion in \eqref{eq:app_sol}
implies that
$w \in \partial_\varepsilon [\phi_{2m}(\cdot;x)](x)$.
Since  $\phi_{ m}$ is
convex in view of assumption (A1),
we easily see that
$\phi_{2m}(\cdot;x)$ is
$m$-strongly convex, and hence
the function $\phi_{2m}(\cdot;x)-\inner{w}{\cdot}$
has a global minimizer
$\tilde x$
which, in view of Lemma \ref{lem:mu convex} with $g=\phi_{2m}(\cdot;x)-\inner{w}{\cdot}$ and
$\mu=m $, satisfies
\begin{equation}
w \in \partial \left[ \phi_{2m}(\cdot;x) \right] (\tilde x), \quad \|x-\tilde x \| \le \sqrt{ \frac{2\varepsilon}{m} } \le \sqrt{ \frac{2 \bar \varepsilon}{m} }
\label{eq:nearby}
\end{equation}
where the last inequality is due to the
last inequality in \eqref{eq:app_sol}.
Now, letting
$\hat w := w - 2m (\tilde x-x)$, it follows from \eqref{eq:key_chara1} with $m = 2m$ and 
 Lemma \ref{lem:chara_weakly} with $(\varepsilon,x,c) = (0,\tilde x,x)$ that
\begin{equation}\label{eq:belong}
 \hat w \in \partial \left[ \phi_{2m}(\cdot;\tilde x) \right] (\tilde x)
= \partial \phi(\tilde x). 
\end{equation}
Moreover,
using the definition
of $\hat w$ and the triangle inequality, we
have
\[
\|\hat w\| \le \|w\| + 2m \|x-\tilde x\| \le
\bar \eta + 2 \sqrt{2m \bar \varepsilon} ,
\]
where the second inequality is due to the first inequality in
\eqref{eq:app_sol} and the inequality in \eqref{eq:nearby}. Then the above inequality and \eqref{eq:belong} imply that 
\begin{equation}\label{eq:tildedis}
\operatorname{dist}(0,\partial \phi(\tilde x)) \le \bar \eta + 2 \sqrt{2m \bar \varepsilon}.
\end{equation}
Now statement (a) follows from \eqref{eq:nearby}, \eqref{eq:tildedis}, and Proposition \ref{prop:subdir}.

b) Statement (b) immediately follows from the first statement and Proposition \ref{prop:relation}(a) with $\lam=1/m$.
\QEDA


\section{Important Results about Recursive Formula}
\label{IRRF}

\begin{lemma}\label{lem:keyrecur}
		Assume that for some integer $J >0$, scalars $\delta>0$, $a \ge 0$ and $b \ge 0$ such that $a+b>0$, and
  scalars $\{q_0,\ldots,q_{J} \}$, we have that
		\begin{equation}\label{ineq:recursive1}
		q_{j} > \delta
 \ \ \forall j \in \{0,\ldots,J\},
		\end{equation}
        and
        \begin{equation}\label{ineq:recursive2}
		q_{j}\left(1 + \frac{q_{j}}{a + b q_{j}}\right) 
  \le q_{j-1}, \ \ \forall j \in \{1,\ldots,J\}.
		\end{equation}
 Then, 
  \begin{equation}\label{def:tl}
     J \le \bar N :=(2b+3) \log ^+ \left(\frac{  q_0}{\delta + a/(b+1)}\right) + \frac{2a}{\delta} + b +2.
  \end{equation}
  \end{lemma}
	\begin{proof}
 Define $\tilde \delta := \delta + a/(b+1)$ and
 $\theta := 1/2(b+1)$. Suppose for contradiction that $J > \bar N$.
 We first claim that
\begin{equation}\label{ineq:j}
\bar j :=  \max \{ j \le J: q_j  \ge \tilde \delta\} \le {\cal T}:= \frac{\log ^ + (q_0/\tilde \delta)}{ \log (1+\theta) }\le (1+\theta^{-1})\log ^+\left(\frac{q_0}{\tilde \delta}\right)\le \bar N.
\end{equation}
Assume for contradiction that there exists an index $j > {\cal T} $ such
that $q_j \ge  \tilde \delta$.
Using the fact
$\{q_j\}$ is non-increasing (see \eqref{ineq:recursive2}) and the definitions of $\tilde \delta$ and $\theta$,
we have that for every  $l=1,\ldots,j$,
 \begin{align*}
 \frac{q_l}{a + b q_l} \ge \frac{\tilde \delta}{a + b\tilde \delta} = 
 \frac{\delta + a/(b+1)}{a + b\delta + ab/(b+1)} 
 \ge 
 \frac{\delta + a/(b+1)}{2 a + b\delta} 
 \ge \frac{1}{2(b+1)} = \theta.
 \end{align*}
 Hence, 
 in view of  \eqref{ineq:recursive2}, we conclude that
 $
 (1+\theta)^j q_{j} \le
 (1+\theta)^{j-1} q_{j-1}
  \le  \cdots \le  q_0 
 $.
 On the other hand, using the
 fact that  $j > \cal T$ and the definition of $\cal T$ in \eqref{ineq:j}, we easily see that
 $q_0 < (1+ \theta)^j \tilde \delta$. Combining the last two conclusions, it follows that $q_j < \tilde \delta$.
 Since the last conclusion contradicts the fact that $q_j \ge \tilde \delta$, the claim follows.

Note that the definition
of $\bar j$ in \eqref{ineq:j} implies that
$\tilde \delta > q_{\bar j+1} \ge q_j$ for any $j \ge \bar j+1$. This fact, the assumption that $a \ge 0$ and $ b \ge 0$, and relation
 \eqref{ineq:recursive2} then imply that
   \[
   \frac{1}{q_{j}} - \frac{1}{q_{j-1}}  \ge \frac{1}{a
  + (b+1) q_{j }} \ge \frac{1}{a
  + (b+1) \tilde \delta}
  \quad \forall j \ge \bar j+1.
   \]
   Thus, for any given $j \ge \bar j+2$, summing the above inequality from $\bar j +2$ to $j$ and using the fact that $q_{\bar j+1} < \tilde \delta$,
   we then conclude that
   \[
   \frac{1}{q_j}  \ge \frac{1}{q_{\bar j +1}} +  \frac{j-1-\bar j }{a
  + (b+1) \tilde \delta} > \frac{1}{\tilde \delta} +  \frac{j-1-\bar j }{a
  + (b+1) \tilde \delta}  \ge  \frac{j-\bar j }{a
  + (b+1) \tilde \delta},
   \]
   and hence that
\begin{equation}\label{ineq:side}
        q_{\bar N} \le \frac{a
  + (b+1) \tilde \delta}{\bar N-\bar j } = \frac{2a + (b+1)\delta }{\bar N-\bar j } \le \delta
   \end{equation}
  where the equality is due to the definition of $\tilde \delta$ and the last inequality is due to the definition of $\bar N$ and $\bar j$ in \eqref{def:tl} and \eqref{ineq:j}, respectively. Hence, the above inequality contradicts  \eqref{ineq:recursive1} and thus the statement follows.
	\end{proof}

\section{Technical Results for the Proof of Theorem \ref{thm:main1}}
\label{APP:analysis}
The main result of this section is Lemma \ref{lem:bdd_smdist} which was used in the proof of Lemma \ref{lem:t1}. 
	
	Before stating and proving Lemma \ref{lem:bdd_smdist},
	we first present two technical results whose proof can be found in Appendix A of \cite{liang2024unified}.
	\begin{lemma}\label{lem:prox-dist}
	Let $z_0\in \R^n$, $0<\zeta < 1$, $\lam >0$, and $\Gamma\in \bConv{n}$ be given, and
	define
	\begin{equation}\label{eq:z_lam}
	    z_\lam=\underset{u\in \R^n}\argmin \left\lbrace \Gamma(u) +\frac{1}{2 \lam}\|u- z_0 \|^2 \right\rbrace,\quad
	    z_{\zeta \lam}=\underset{u\in \R^n}\argmin \left\lbrace \Gamma(u) +\frac{1}{2 \zeta \lam}\|u-z_0\|^2 \right\rbrace.
	\end{equation}
	Then, we have $ \|z_{\zeta\lam}-z_0\| \ge \zeta \|z_\lam-z_0\|$.
	\end{lemma}

	\begin{lemma}\label{lem:descent}
	For some $(M,L) \in \R^2_+$, assume that $(z_0,\lam)\in \R^n \times (0,1/ L)$,
	function
	 $ \tilde f  \in 
	 \bConv{n} \cap {\cal C}(M,L)$ and
	 $\Gamma, h \in \bConv{n} $ are such that
	 \[
	 \ell_{\tilde f}(\cdot;z_0)+h \le \Gamma \le \tilde f+h.
	 \]
	Then, for every $u\in \dom h$, we have
	\begin{equation}\label{ineq:descent}
	    \frac{1}{2\lam} \|u-z_\lam\|^2 + (\tilde f+h)(z_\lam) - (\tilde f+h)(u) \le \frac{1}{2\lam} \|u-z_0\|^2 + \frac{2\lam M^2}{1-\lam L}.
	\end{equation}
	\end{lemma}

 \begin{lemma}\label{lem:bdd_smdist}
	For some $(M,L) \in \R^2_+$ and $m \ge 0$, assume that $(z_0,\lam)\in \R^n \times \R_{++}$,
	function
	 $f  \in {\cal C}(M,L)$ is $m$-weakly convex, and
	 $\Gamma, h \in \bConv{n} $ are such that
	 \[
	 \ell_{ f}(\cdot;z_0)+h \le \Gamma \le f_m(\cdot;z_0)+h,
	 \]
	where $f_m(\cdot;z_0)$ is as in \eqref{def:phim}.
	Then,  for every $u \in \R^n$, we have
	\begin{equation}
	\zeta^2\left(\frac{1}{4\zeta \lam}+\frac{m}{2} \right)\|z_\lam - z_0 \|^2 
	\le
	(f+h)(u) - (f+h)(z_{\zeta\lam}) + \left(\frac m2+\frac{1}{\zeta \lam} \right) \|u-z_0\|^2 + 4\zeta \lam M^2,
\label{eq:uniform_bdd}
	\end{equation}
	where $z_\lam$ and $z_{\zeta \lam}$ are as in \eqref{eq:z_lam} and $\zeta $ is defined as \eqref{label:def_zeta}.
\end{lemma}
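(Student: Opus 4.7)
The plan is to combine Lemma~\ref{lem:descent}, applied to the convexification $\tilde f = f_m(\cdot;z_0)$, with the prox-distance estimate of Lemma~\ref{lem:prox-dist}. By Lemma~\ref{lem:Lip_ineq}, $f_m(\cdot;z_0) \in \Conv{n} \cap {\cal C}(M,L+m)$, and by \eqref{eq:simlin} the linearization at $z_0$ coincides with $\ell_f(\cdot;z_0)$, so the assumed sandwich $\ell_f(\cdot;z_0)+h \le \Gamma \le f_m(\cdot;z_0)+h$ fits the hypothesis of Lemma~\ref{lem:descent} with $\tilde f = f_m(\cdot;z_0)$ and step $\zeta\lam$. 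A case check using \eqref{label:def_zeta} gives $\zeta\lam \le 1/[2(L+m)]$ in both branches of the definition of $\zeta$, so $1-\zeta\lam(L+m) \ge 1/2$ and the residual term in \eqref{ineq:descent} obeys $2\zeta\lam M^2/[1-\zeta\lam(L+m)] \le 4\zeta\lam M^2$.

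Applying \eqref{ineq:descent} at the point $u$ and then expanding $f_m(\cdot;z_0) = f(\cdot) + (m/2)\|\cdot - z_0\|^2$ on the left-hand side produces, after rearrangement,
\begin{equation}\label{plan:main}
\frac{m}{2}\|z_{\zeta\lam}-z_0\|^2 + \frac{1}{2\zeta\lam}\|u - z_{\zeta\lam}\|^2 \le (f+h)(u) - (f+h)(z_{\zeta\lam}) + \left(\frac{m}{2}+\frac{1}{2\zeta\lam}\right)\|u-z_0\|^2 + 4\zeta\lam M^2.
\end{equation}
Two further manipulations convert \eqref{plan:main} into \eqref{eq:uniform_bdd}. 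First, I add $(4\zeta\lam)^{-1}\|z_{\zeta\lam}-z_0\|^2$ to both sides of \eqref{plan:main} and use the elementary bound $\|z_{\zeta\lam}-z_0\|^2 \le 2\|u - z_{\zeta\lam}\|^2 + 2\|u-z_0\|^2$; the half-factor is chosen precisely so that the newly added term on the right matches $(2\zeta\lam)^{-1}\|u - z_{\zeta\lam}\|^2$ (cancelling that term) plus an extra $(2\zeta\lam)^{-1}\|u-z_0\|^2$. Second, Lemma~\ref{lem:prox-dist} with roles $\lam$ and $\zeta\lam$ yields $\zeta\|z_\lam - z_0\| \le \|z_{\zeta\lam}-z_0\|$, so $\zeta^2[(4\zeta\lam)^{-1}+m/2]\|z_\lam - z_0\|^2$ is dominated by the left-hand side that results from the first step. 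Collecting everything gives the target inequality \eqref{eq:uniform_bdd}.

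The main obstacle is the bookkeeping around the cancellation of $\|u-z_{\zeta\lam}\|^2$: the coefficient $1/(4\zeta\lam)$ on the left of \eqref{eq:uniform_bdd} is exactly what is needed for the expansion $\|z_{\zeta\lam}-z_0\|^2 \le 2\|u-z_{\zeta\lam}\|^2 + 2\|u-z_0\|^2$ to absorb the stray $(2\zeta\lam)^{-1}\|u-z_{\zeta\lam}\|^2$ term without generating a worse dependence on $\|u-z_0\|^2$ than the stated $(m/2 + 1/(\zeta\lam)) = (1/2)(m + 2/(\zeta\lam))$. The degenerate case $\lam \le 1/[2(L+m)]$ (where $\zeta = 1$ and $z_\lam = z_{\zeta\lam}$) is covered automatically since Lemma~\ref{lem:prox-dist} becomes an equality. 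Finally, the restriction $u \in \dom h$ in Lemma~\ref{lem:descent} is removed for free in \eqref{eq:uniform_bdd} because both sides of that inequality are $+\infty$ outside $\dom h$.
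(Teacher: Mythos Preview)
Your proof is correct and follows essentially the same route as the paper: apply Lemma~\ref{lem:descent} with $\tilde f = f_m(\cdot;z_0)$ at step $\zeta\lam$, expand $f_m$, do a quadratic rearrangement, and finish with Lemma~\ref{lem:prox-dist}. The only cosmetic differences are that the paper first treats the case $\zeta=1$ and then specializes to $\zeta\lam$, and that its quadratic step uses the polarization identity $\|u-z_{\zeta\lam}\|^2 - \|u-z_0\|^2 = \|z_{\zeta\lam}-z_0\|^2 - 2\inner{z_{\zeta\lam}-z_0}{u-z_0}$ followed by Young's inequality, whereas you add $(4\zeta\lam)^{-1}\|z_{\zeta\lam}-z_0\|^2$ to both sides and invoke $\|z_{\zeta\lam}-z_0\|^2 \le 2\|u-z_{\zeta\lam}\|^2 + 2\|u-z_0\|^2$; these two manipulations yield the same constants.
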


	\begin{proof}
	We first prove \eqref{eq:uniform_bdd} under the assumption that
	$\lam \in (0,(2(L+m))^{-1}]$,
	and hence $\zeta=1$ in view of \eqref{label:def_zeta}.
	Indeed, noting that
	$f_m(\cdot;z_0) \in \Conv{n} \cap {\cal C}(M,L+m)$ due to Lemma \ref{lem:Lip_ineq}, it follows from Lemma~\ref{lem:descent} with
	$\tilde f = f_m(\cdot;z_0)$  and $ \lam \le 1/[2(L+m)]$ that
	for any $u \in \dom h$:
    \[
	    \frac{1}{2 \lam} \|u- z_{\lam}\|^2 + (f_m(\cdot;z_0)+h)(z_{\lam}) - (f_m(\cdot;z_0)+h)(u) - \frac{1}{2 \lam}  \|u-z_0\|^2 
	    \stackrel{\eqref{ineq:descent}}\le \frac{2  \lam  M^2}{1- \lam ( L +m)}
	    \le  4\lam M^2,
	\]
	and hence that
    \[
	(f_m(\cdot;z_0)+h)(u) - (f_m(\cdot;z_0)+h)(z_{\lam}) +4 \lam M^2
	\ge \frac{1}{2\lam} \left(\|u- z_{\lam}\|^2 - \|u-z_0\|^2 \right) 
	\ge \frac{1}{4 \lam}\|z_{\lam} - z_0\|^2 -\frac{1}{\lam} \|u - z_0\|^2,
	\]
	where the last inequality is due to the fact that
	$2a^2+2b^2 \ge (a+b)^2$ and the triangle inequality.
	Rearranging the above inequality and using the definition of $f_m(\cdot;z_0)$ in \eqref{def:phim}, we then conclude that
	\begin{equation}
	   (f+h)(u) - (f+h)(z_{\lam}) + 4\lam M^2\ge  \left(\frac{1}{ 4\lam} + \frac{m}{2}\right)\|z_{\lam} - z_0\|^2 - \left( \frac1{\lam} +\frac{m}2 \right)  \|u - z_0\|^2
	   \label{eq:small_lam}
	\end{equation}
	which, in view of the fact that
	$\zeta=1$, immediately implies \eqref{eq:uniform_bdd}.
	Next, we show that \eqref{eq:uniform_bdd} also holds for $\lam >1/[2(L+m)]$.
Noting that \eqref{label:def_zeta} implies that $\zeta \in (0,1)$ and $\zeta \lam = 1/[2(L+m)]$, it then follows from \eqref{eq:small_lam} with
$\lam$ replaced by $\zeta \lam$ that
	\[
	    (f+h)(u) - (f+h)(z_{\zeta\lam}) + \left( \frac{m}2 +\frac{1}{\zeta \lam} \right)\|u -z_0\|^2+ 4 \zeta\lam M^2 \ge
	    \left(\frac{1}{4 \zeta \lam}+\frac{m}{2} \right)\|z_{\zeta \lam} - z_0\|^2.
	\]
    Finally, \eqref{eq:uniform_bdd} immediately follows from the above inequality and Lemma \ref{lem:prox-dist}.
	\end{proof}

 \section{Further Discussion Regarding \eqref{ineq:Moreau-bound}} \label{sec:termination}

\begin{proposition}
   For every $K \ge 1$, then we have
    \[
 \inf_{k \le K} \|\nabla \hat M^{\lam}(\hat y_{k-1})\|^2 
    \le \frac{2(1+\lam m)^2}{\lam} \left(4 \delta + \frac 3K [\phi(\hat x_0) - \phi(\hat y_K)]\right).
\]
\end{proposition}

\begin{proof}
 For every $k\ge 1$, let $x_k^*$ be the minimizer of \eqref{sub_pro}. Hence, using the definition of $\phi_m(\cdot;\hat y_{k-1})$ in \eqref{def:phim} and the fact that the objective function of \eqref{sub_pro} and is $\lam^{-1}$-strongly convex, we have
\begin{equation}\label{ineq:phim}
    \phi_{m}(\hat y_k;\hat y_{k-1})+ \frac{1}{2\lam} \|\hat y_{k} - \hat y_{k-1}\|^2  \ge \phi_{ m}(x_k^*;\hat y_{k-1}) + \frac{1}{2\lam} \|x_{k}^* - \hat y_{k-1}\|^2 + \frac{1}{2\lam} \|\hat y_k-x_k^*\|^2.
\end{equation}
Moreover, using Lemma \ref{lem:basic_prop}(a) and the fact that the objective function in \eqref{eq:optmality} is $\lam^{-1}$-strongly convex, we for every $u\in \dom h$,
\begin{equation}\label{ineq:hatGamma}
    \hat \Gamma_k(\hat x_k) + \frac1{2\lam} \|\hat x_k-\hat y_{k-1}\|^2 \le
\hat \Gamma_k(u) + \frac1{2\lam} \|u-\hat y_{k-1}\|^2 - \frac1{2\lam} \|u-\hat x_k\|^2.
\end{equation}
Combining \eqref{eq:hattheta} and \eqref{eq:control}, and using \eqref{ineq:hatGamma}, we have for every $u\in \dom h$,
\begin{align}
    &\hat \delta_k \overset{\eqref{eq:hattheta},  \eqref{eq:control}}\ge \phi_{ m}(\hat y_k;\hat y_{k-1})+ \frac{1}{2\lam} \|\hat y_{k} - \hat y_{k-1}\|^2  - \hat \Gamma_k(\hat x_k) - \frac{1}{2\lambda} \| \hat x_k - \hat y_{k-1}  \|^2 \nn \\
    &\stackrel{\eqref{ineq:hatGamma}}\ge 
 \phi_{ m}(\hat y_k;\hat y_{k-1})+ \frac{1}{2\lam} \|\hat y_{k} - \hat y_{k-1}\|^2 -\hat \Gamma_k(u) - \frac1{2\lam} \|u-\hat y_{k-1}\|^2 + \frac1{2\lam} \|u-\hat x_k\|^2 \nn \\
 &\stackrel{\eqref{eq:basic}}\ge 
 \phi_{ m}(\hat y_k;\hat y_{k-1})+ \frac{1}{2\lam} \|\hat y_{k} - \hat y_{k-1}\|^2 - \phi_m (u;\hat y_{k-1}) - \frac1{2\lam} \|u-\hat y_{k-1}\|^2 + \frac1{2\lam} \|u-\hat x_k\|^2, \label{ineq:second-step}
\end{align}
where the last inequality is due to the inequality in \eqref{eq:basic}.
Taking $u= \hat y_{k-1}$ in \eqref{ineq:second-step} yields
\begin{equation}\label{ineq:more-step}
    \hat \delta_k + \phi(\hat y_{k-1}) - \phi(\hat y_k) \ge \hat \delta_k + \phi(\hat y_{k-1}) - \phi_m(\hat y_k;\hat y_{k-1}) \ge \frac{1}{2\lam}\|\hat y_k-\hat y_{k-1}\|^2 + \frac{1}{2\lam}\|\hat x_k-\hat y_{k-1}\|^2.
\end{equation}
Taking $u= x_k^*$ in \eqref{ineq:second-step} and using \eqref{ineq:phim}, we have
\begin{equation}\label{ineq:third-step}
    \hat \delta_k \ge 
  \frac1{2\lam} \|\hat y_k-x_k^*\|^2 + \frac1{2\lam} \|x_k^*-\hat x_k\|^2.
\end{equation}
Combining \eqref{ineq:more-step} and \eqref{ineq:third-step},
we obtain
\begin{align}
    2\hat \delta_k + \phi(\hat y_{k-1}) - \phi(\hat y_k) 
    & \ge \left(\frac{1}{2\lam}\|\hat x_k-\hat y_{k-1}\|^2
+ \frac1{2\lam} \|x_k^*-\hat x_k\|^2\right) + \left(\frac{1}{2\lam}\|\hat y_k-\hat y_{k-1}\|^2 + \frac{1}{2\lam}\|\hat y_k-x_k^*\|^2\right) \nn \\
&\ge \frac1{4\lam} \|x_k^*-\hat y_{k-1}\|^2 + \frac{1}{4\lam} \|x_k^*-\hat y_{k-1}\|^2 = \frac{1}{2\lam}\|x_k^*-\hat y_{k-1}\|^2, \label{ineq:xy}
\end{align}
where the second inequality is due to the fact that $2a^2+2b^2\ge(a+b)^2$ for every $a, b \in \R$ and the triangle inequality.
Noting from step 3) of PBF and \eqref{def:wj}, and using \eqref{eq:est_w}, we have
\[
\hat \delta_k \stackrel{\eqref{def:wj}}= \delta +\frac{\lam}{8(m \lam +1)}\| \hat w_k\|^2 \stackrel{\eqref{eq:est_w}}\le 2\delta +  \frac12 \left( m+\frac{1}{\lambda} \right) \|\hat y_{k} - \hat y_{k-1}\|^2 \stackrel{\eqref{eq:bdd_diff}}\le 2\delta + \phi(\hat y_{k-1}) - \phi(\hat y_k),
\]
where the last inequality is due to \eqref{eq:bdd_diff}.
The above inequality and \eqref{ineq:xy} yield that
\[
\frac{1}{2\lam}\|x_k^*-\hat y_{k-1}\|^2 \le 4\delta + 3[ \phi(\hat y_{k-1}) - \phi(\hat y_k)].
\]
Moreover, recall that $x_k^*$ is the minimizer of \eqref{sub_pro}, and hence $x_k^* = \hat x^{\lam}(\hat y_{k-1})$ in view of \eqref{eq:hatx}. It thus follows from \eqref{eq:new_grad} that
\[
\frac{\lam}{2(1+\lam m)^2} \|\nabla \hat M^{\lam}(\hat y_{k-1})\|^2 \le 4\delta + 3[ \phi(\hat y_{k-1}) - \phi(\hat y_k)].
\]
Therefore, the proposition immediately follows by summing the above inequality from $k=1$ to $K$ and dividing the resulting inequality by $K$.
\end{proof}

\end{document}